 \newtheorem{thm}{Theorem}[section]
 \newtheorem{cor}[thm]{Corollary}
 \newtheorem{lem}[thm]{Lemma}
 \theoremstyle{definition}
 \newtheorem{defn}[thm]{Definition}
 \theoremstyle{remark}
 \newtheorem{rem}[thm]{Remark}
 \theoremstyle{example}
 \numberwithin{equation}{section}
 \newcommand{\F}{\mathcal{F}}
 \newcommand{\N}{\mathbb{N}}
 \newcommand{\Z}{\mathbb{Z}}
\begin{document}
\email{dahmadi1387@gmail.com, maliheh.dabbaghian@gmail.com}
\subjclass[2010]{54H20, 37A25, 37B10}

\keywords{$\mathcal{F}$-mixing, $\mathcal{F}$-transitive, spacing shifts}
\title[]
{Types of mixings and transitivities in topological dynamics}

\author[D. Ahmadi, M. Dabbaghian]{Dawoud Ahmadi Dastjerdi, Maliheh Dabbaghian Amiri}

\begin{abstract}
 Using the combinatorial properties of subsets of
integers, a classification of metric dynamical systems was
given in [V. Bergelson and T. Downarowicz, {Large sets of integers
and hierarchy of mixing properties of measure-preserving systems},
Colloquium Mathematicum {\bf 110}(1), (2008), 117-150]. As a result, some new families emerged.
Here, their counterparts in topological dynamics has been
considered. The differences will be discussed  and new classes of
systems in topological dynamics will be introduced. 
\end{abstract}
\maketitle
\section*{Introduction}
{
The parallel concepts for measure-theoretical dynamical systems (MDS) and topological
 dynamical systems (TDS) such as ergodicity vs minimality, measure-theoretical entropy
 vs topological entropy, measure-theoretical mixing vs topological mixing, etc. has been
 always  a subject for investigation \cite{glasner, huang-li, huang}.
In this respect, we first consider the different notions of mixing in TDS, that is, we partition the family of weak mixing TDS into different sub-classes  
 and will compare them to the similar ones considered for MDS in \cite{ber}; however, we extend this task to transitive TDS as well.  That is, we will also introduce different sub-classeses in transitive topological systems.
The main tool in \cite{ber} was combinatorial number theory which is one of ours and we also use spacing shifts to construct our examples. 
Spacing shifts, a class of subshifts, was first introduced by Lau and Zame in 1973 for constructing an example of a weak mixing system which is not strong mixing \cite{lau}. 
The fact that this can be done is due to the peculiarity of the spacing shifts that allows one to investigate dynamical properties of a subshift via combinatorial number theory. 
For instance,  Lau and Zame noticed that if $P$ is thick but not cofinite, then the associated spacing shift denoted by $\Sigma_P$ is weak mixing but not strong mixing. 
Our goal is more or less the same. 
We use families of subsets of integers, say from Hindman's table presented in {\cite{Hind}} and try to establish different examples of mixings and transitives in systems.
 An objective is to sort out if different families of integers do introduce different dynamics on the respective spacing shifts as well. The  families we are considering appear in diagram ~\eqref{diag}.

Our arrangement in this note is as follows.
The  Hindman's table is introduced in Section \ref{pre}.
The properties of a family $\mathcal{F}$ when  an $\mathcal{F}$-mixing system occurs is given in section \ref{mix-sec} and via examples by spacing shifts 
we show that they are really different mixings. 
Section \ref{tran-sec} is devoted to different transitive families using families of difference sets like $\mathcal{F-F}$ and we show that how these difference sets can be placed in Hindman's table.  
The application of our earlier results to minimal systems are considered in Section \ref{mds}. There we show that for minimals, the results very much resemble  the corresponding results in MDS.
}
\section{Preliminaries}\label{pre}
A TDS is a pair $(X,\,T)$ such
that $X$ is a compact metric space and $T$ is a
homeomorphism.
  In a TDS,
the \emph{return times set} is defined to be $N(U,\,V)=\{n\in
\mathbb {\mathbb Z}: T^n(U)\cap V\neq \emptyset\}$ where $U$ and
$V$ are \emph{opene} (non-empty and open) sets. A TDS is
\emph{transitive} if for any two opene sets $U$ and $ V$, 
$N(U,\,V)\neq \emptyset$; and it is \emph{totally transitive} if $(X,\,T^n)$ is transitive for any $n$. A TDS 
 is \emph{weak mixing}
if the product system $(X\times X,\, T\times T)$ is transitive and it
 is \emph{(strong) mixing} if $N(U,\, V)$ is cofinite for any
opene sets $U,\, V$.

Let ${\mathcal F}$ be a family of nonempty subsets of $\mathbb Z$.
This means
 ${\mathcal F}$ is hereditary upward: if $F_1\in  {\mathcal F}$ and $F_1\subseteq F_2$, then  $F_2\in{\mathcal F}$.
The \emph{dual} of  ${\mathcal F}$, denoted by ${\mathcal F}^*$,
is defined to be all subsets of $ \mathbb Z$
  meeting all sets in ${\mathcal F}$:
$${\mathcal F}^*= \{G\subset \mathbb Z :\ G\cap F\neq
\emptyset, \ \forall F \in {\mathcal F} \}.$$

A family $\mathcal{F}$ is called \emph{partition regular} if
$F\in\mathcal{F}$  is partitioned into finite sets
$F=F_1\cup\cdots\cup F_k$, then there is $i$ such that
$F_i\in\mathcal{F}$. A non-empty family closed under finite
intersections is called a \emph{filter}. It is known that if
$\mathcal{F}$ is partition regular, then $\mathcal{F}^*$ is a
filter.  A filter which is partition regular is called an
\emph{ultrafilter}. The collection of all ultrafilters is denoted
by $\beta\mathbb Z$ and when endowed with appropriate topology,
becomes the \emph{Stone-$\breve{C}$\!eck compactification} of
$\mathbb Z$. There is a natural semigroup structure in
$\beta\mathbb Z$ extending the addition operation of $\mathbb Z$
\cite{idem}. An ulterafilter $p\in\beta\mathbb Z$ is called
\emph{idempotent} if $p+p=p$.

 For a
family $\mathcal{F}$ and $k\in \mathbb Z$, the \emph{shifted
family} is defined as ${\mathcal F}+k=\{F+k:\
F\in{\mathcal{F}}\}$ where $F+k=\{n+k:\ n\in F \}$. An example of a shift invariant is 
$\mathcal I ^*$,  the family of all cofinite sets ($\mathcal I =\{F\subset \mathbb Z: \ |F|=\infty\}$). 
 Also, starting with a family  $\mathcal F$,
families  $\mathcal F_+$  and $\mathcal F_{\bullet}$ defined as
$$\mathcal F_+:=\bigcup_{k\in
\mathbb Z}({\mathcal F}+k),\qquad\mathcal F_{\bullet}:=
\bigcap_{k\in \mathbb Z}({\mathcal F}+k)$$
are shift invariant families \cite{ber}.

We have $\mathcal F_{\bullet}\subset \mathcal F \subset \mathcal
F_+$ and  $\mathcal{F}^*_{\bullet}=(\mathcal F_+)^*$
\cite{ber}.
{
  Also,
  if $\mathcal{F}\subset
\mathcal{F}'$ then
 $\mathcal{F}_+\subset \mathcal{F}'_+$ and
 $\mathcal{F'}^*\subset \mathcal{F}^*$ which
 implies that
$\mathcal{F'}^*_{\bullet}\subset \mathcal{F}^*_{\bullet }$.
}
 If
$\mathcal{F}$ is a filter, so is any shift of $\mathcal{F}$ and
since the intersection of filters is again a filter,
$\mathcal{F}_\bullet$ is a filter.

 A set $F \subset \mathbb Z$ is called an \emph{$IP$-set} if it
contains the set of  finite sums of some sequence of nonzero integers $A=\{a_n\}_{n \geq 1}$ and is denoted by $ FS(A)$; thus,  $ FS(A)=\{a_{i_1}+a_{i_2}+\cdots+a_{i_n}, \ \
i_j<i_{j+1}\}\subset F$.  Equivalently, a
subset $F\subset \mathbb Z$ is an $IP$-set if and only if there
is an idempotent $p\in \beta\mathbb Z$ such that $F\in p$
\cite[Theorem 1.5]{idem}. Denote by ${\mathcal{IP}}$ the family of
all $IP$-sets.

Set $d^*(A):=\limsup_{(M-N)\rightarrow\infty}\frac{|A\cap
[N,\,M]|}{M-N+1}$ and call it the \emph{upper Banach density} of a set
$A\subset \mathbb Z$. We denote the family of all positive
upper Banach density by $\mathcal{F}_{pubd}$.  Also, the
\emph{lower Banach density} of a set $A\subset \mathbb Z$ is
defined similarly as $d_*(A)=\liminf_{(M-N)\rightarrow\infty}\frac{|A\cap
[N,\,M]|}{M-N+1}$.
 
A subset $E\subset\mathbb Z$ is called
\begin{itemize}
\item \emph{$\Delta$-set} if there exists a sequence of natural numbers
$S=(s_n)_{n\in\mathbb N}$ such that the difference set
$\Delta(S)=\{s_i-s_j; \ \ i>j\} \subset E$.  Let ${\Delta}$ be
the family of all $\Delta$-sets
  \item \emph{thick}, if it contains arbitrarily long intervals
$[a,\,b]=\{a,\,a+1,\,\cdots,\,b\}\subseteq \mathbb{Z}$.  We denote the family of all
thick sets by $\mathcal{T}$.
  \item \emph{syndetic,} if its complement is not a thick set.
  Equivalently, it has  bounded gaps.  We have $\mathcal{S}={\mathcal{T}}^*$.
  \item \emph{piecewise syndetic}, if it is the intersection of a
  thick set and a syndetic set and the corresponding family is denoted by $\mathcal{PS}$.
  \item  \emph{thickly  syndetic} if its complement is not piecewise syndetic  and  denote by $\mathcal{TS}$, the family of thickly syndetic sets. 
We have  $\mathcal{TS}= \mathcal{PS}^*$.
  \item $D$-\emph{set} if it is a member of an idempotent $p$ whose any
member has positive upper Banach density;  the
family of all $D$-{sets} is denoted by $\mathcal D$.  
\item \emph{central} or \emph{C-set} if it is a member of a
minimal idempotent of
 $\beta\mathbb Z$ and let $\mathcal C$ be the
family of all $C$-sets.
\item \emph{Bohr set} if there exist $m\in \N,\, \alpha \in \mathbb{T} =\{z\in \mathcal{C} : |z| = 1\}$ and open set $U\subset \mathbb{T}^m$
 such that the set $\{n \in \Z : n\alpha \in U\}$ is contained in $E$.

\end{itemize}

{


\section{Mixing properties}\label{mix-sec}
Let $\mathcal{P}(\Z)$ be the power set of $\Z$.
  For any family $\mathcal{F}$ 
$$\tau\mathcal{F}=\{F\in\mathcal{P}(\Z): (F-k_1)\cap\cdots\cap(F-k_n)\in\mathcal{F}, \{k_1,\ldots,k_n\}\subset \Z\}$$
is defined to be the \emph{thick family} contained in $\mathcal{F}$. A family $\mathcal{F}$ is \emph{thick family} if and only if
 $\tau\mathcal{F}=\mathcal{F}$  \cite{huang3}. 

\begin{thm}\label{thick-filter}
  $\tau \mathcal{F}\subset \mathcal{F}_{\bullet}$ and if $\mathcal{F}$ is a filter, then $\tau \mathcal{F}=\mathcal{F}_{\bullet}$.
\end{thm}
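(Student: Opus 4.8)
The plan is to unwind both sides into the single elementary condition ``$F-k\in\mathcal{F}$'' and compare. Note first that $G\in\mathcal{F}+k$ holds exactly when $G-k\in\mathcal{F}$, so that $F\in\mathcal{F}_{\bullet}=\bigcap_{k\in\Z}(\mathcal{F}+k)$ if and only if $F-k\in\mathcal{F}$ for every $k\in\Z$. On the other hand, directly from the definition, $F\in\tau\mathcal{F}$ if and only if $\bigcap_{i=1}^{n}(F-k_i)\in\mathcal{F}$ for every finite set $\{k_1,\dots,k_n\}\subset\Z$. With these two reformulations in hand the proof is a matter of quantifier manipulation.

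For the inclusion $\tau\mathcal{F}\subset\mathcal{F}_{\bullet}$, I would specialize the defining property of $\tau\mathcal{F}$ to singletons: given $F\in\tau\mathcal{F}$ and an arbitrary $k\in\Z$, take $n=1$ and $k_1=k$ to get $F-k\in\mathcal{F}$. Since $k$ was arbitrary, $F\in\mathcal{F}_{\bullet}$. This step uses nothing about $\mathcal{F}$ beyond it being a family.

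For the reverse inclusion under the hypothesis that $\mathcal{F}$ is a filter, take $F\in\mathcal{F}_{\bullet}$ and any finite set $\{k_1,\dots,k_n\}\subset\Z$. Each translate $F-k_i$ lies in $\mathcal{F}$ by the description of $\mathcal{F}_{\bullet}$ above, and since a filter is closed under finite intersections, $\bigcap_{i=1}^{n}(F-k_i)\in\mathcal{F}$. As the finite set was arbitrary, $F\in\tau\mathcal{F}$; hence $\mathcal{F}_{\bullet}\subset\tau\mathcal{F}$ and, together with the first part, $\tau\mathcal{F}=\mathcal{F}_{\bullet}$.

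There is essentially no hard step here; the only point that requires care is recognizing that the reverse inclusion genuinely depends on closure under finite intersections, so it is precisely the filter hypothesis that does the work — consistently with the fact, already recorded in the preliminaries, that $\mathcal{F}_{\bullet}$ is itself a filter whenever $\mathcal{F}$ is. For a general family one should not expect equality, since there is no reason for $\bigcap_{i}(F-k_i)$ to return to $\mathcal{F}$.
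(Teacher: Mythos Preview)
Your proof is correct and follows essentially the same approach as the paper: both obtain the first inclusion trivially from the definition and obtain the reverse inclusion by invoking closure under finite intersections. The only cosmetic difference is that you apply the filter property directly to $\mathcal{F}$ (each $F-k_i\in\mathcal{F}$, hence their intersection is in $\mathcal{F}$), whereas the paper first passes to $\mathcal{F}_{\bullet}$, uses that $\mathcal{F}_{\bullet}$ is itself a shift-invariant filter to place the intersection in $\mathcal{F}_{\bullet}$, and then implicitly uses $\mathcal{F}_{\bullet}\subset\mathcal{F}$; your route is slightly more direct but the content is the same.
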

\begin{proof}
 $\tau \mathcal{F}\subset \mathcal{F}_{\bullet}$ follows trivially from definition. For the second part, since $\mathcal{F}$ is a filter,  $\mathcal{F}_{\bullet}$ is a filter as well.
 Let $F\in \mathcal{F}_{\bullet}$ and pick $k_1,\,k_2,\,\ldots,\,k_n\in \mathbb{Z}$. 
Since $\mathcal{F}_{\bullet}$ is a shift invariant family and
 it is closed under finite intersections, by the filter property, we have $(F-k_1)\cap \cdots \cap (F-k_n)\in\mathcal{F}_{\bullet}$.
 Hence $F\in \tau\mathcal{F}$ and $\tau\mathcal{F}=\mathcal{F}_{\bullet}$. 
\end{proof}
The classical different mixing concepts for a TDS are
(strong) mixing, mild mixing and weak mixing. They can be
characterized in terms of combinatorial properties of ${\mathcal{N}}$, { the family which is generated by return times set of $(X,\,T)$}. A TDS is mixing, mild mixing or
weak mixing if and only if $\mathcal{N}$ is cofinite, $
(\mathcal{IP-IP})^* $ or thick respectively \cite{huang}. 

 A TDS $(X,\,T)$ is called $\mathcal F$-\emph{transitive} if for any
two opene sets $U,\,V \subset X$,  $N(U,\,V)\in {\mathcal
F}$ and it is  called \emph{$\mathcal F$-mixing} if  the product
system  $(X\times X,\,T\times T)$ is  ${\mathcal F}$-transitive \cite{huang3}.
It is easy to see that
  $(X,\,T)$ is $\mathcal F$-mixing if and only if
  it is weak mixing and $\mathcal F$-transitive if and only if
    $N(U,\,V) \cap N(U,\,U)\in \mathcal F$ for any two opene
  sets $U, V$.

Now we may explicitly define  other mixings
 by considering the combinatorial  structure of $\mathcal{N}$. A motivation for this is the following
hierarchy in the combinatorial number theory:
\begin{eqnarray}\label{mixingfamily}
{\mathcal I}^* \ \subset \  { \Delta}^* \ \subset \  \mathcal{
IP}^* \ \subset  {\mathcal D}^* \ \subset  {\mathcal C}^*.
\end{eqnarray}
As an application, we will define
$\Delta^*,\,\mathcal{IP}^*,\,{\mathcal D}^* $ and ${\mathcal
C}^*$-mixings and by constructing examples, the fact that they
are different families will be shown. First a theorem:

\begin{thm}\label{mixing} 
\begin{enumerate}
\item A TDS
 $(X,\,T)$ is $\mathcal{F}$-transitive if and only if it is ${\mathcal
F}_{\bullet}$-transitive.
\item If $\mathcal{F}$ is a filter,
then $(X,\,T)$ is $\mathcal F$-mixing if and only if it
is ${\mathcal F}_{\bullet}$-mixing.
\item  $(X,\,T)$ is $\mathcal F$-mixing if and only if it
is ${\tau\mathcal F}$-mixing.
\end{enumerate}
\end{thm}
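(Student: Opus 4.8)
The plan is to prove the three parts in the order (1), (3), (2), with (2) deduced formally from (1). For part (1), the inclusion $\mathcal{F}_{\bullet}\subseteq\mathcal{F}$ gives one implication at once. For the converse the only ingredient is the translation identity $N(U,V)-k=N(T^{k}U,V)$, valid for every $k\in\mathbb{Z}$ because $T$ is a homeomorphism and hence $T^{k}U$ is again opene: $n-k\in N(U,V)$ iff $T^{n}U\cap V\neq\emptyset$ iff $T^{n-k}(T^{k}U)\cap V\neq\emptyset$. So if $(X,T)$ is $\mathcal{F}$-transitive then for each $k$ we have $N(U,V)-k=N(T^{k}U,V)\in\mathcal{F}$, which is precisely $N(U,V)\in\bigcap_{k\in\mathbb{Z}}(\mathcal{F}+k)=\mathcal{F}_{\bullet}$; note only upward heredity of $\mathcal{F}$ is used here.

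For part (3), one direction is immediate since $\tau\mathcal{F}\subseteq\mathcal{F}_{\bullet}\subseteq\mathcal{F}$ by Theorem \ref{thick-filter}. For the other direction I would first establish the auxiliary fact that every finite Cartesian power $(X^{m},T\times\cdots\times T)$ of an $\mathcal{F}$-mixing system is again $\mathcal{F}$-transitive. This rests on a Furstenberg-type linking step: if $(X,T)$ is weakly mixing and $U,V,U',V'$ are opene, choose (by transitivity of $X\times X$) an integer $p\in N(U',U)\cap N(V',V)$; then $\widetilde{U}:=U'\cap T^{-p}U$ and $\widetilde{V}:=V'\cap T^{-p}V$ are opene, and $N(\widetilde{U},\widetilde{V})\subseteq N(U,V)$, because $x\in\widetilde{U}$ with $T^{k}x\in\widetilde{V}$ forces $T^{p}x\in U$ and $T^{k}(T^{p}x)\in V$. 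Applying this successively to the coordinates $2,\dots,m$ of a basic box $U_{1}\times\cdots\times U_{m}$, $V_{1}\times\cdots\times V_{m}$ shrinks the first pair to opene $\widehat{U}_{1}\subseteq U_{1}$, $\widehat{V}_{1}\subseteq V_{1}$ with $N(\widehat{U}_{1},\widehat{V}_{1})\subseteq\bigcap_{i=1}^{m}N(U_{i},V_{i})=N_{X^{m}}(U_{1}\times\cdots\times U_{m},\,V_{1}\times\cdots\times V_{m})$; since $(X,T)$ is $\mathcal{F}$-transitive, $N(\widehat{U}_{1},\widehat{V}_{1})\in\mathcal{F}$, and upward heredity transfers this to boxes, hence to all opene sets of $X^{m}$. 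With this in hand, the hard direction of (3) is a short computation in $X\times X$: for opene $\mathbf{U},\mathbf{V}\subseteq X\times X$ and $k_{1},\dots,k_{n}\in\mathbb{Z}$, the set $\bigcap_{j=1}^{n}(N_{X^{2}}(\mathbf{U},\mathbf{V})-k_{j})$ equals $\bigcap_{j=1}^{n}N_{X^{2}}((T\times T)^{k_{j}}\mathbf{U},\mathbf{V})$, which is a single return-time set in the power $(X^{2n},T\times\cdots\times T)$ and therefore lies in $\mathcal{F}$; hence $N_{X^{2}}(\mathbf{U},\mathbf{V})\in\tau\mathcal{F}$, i.e.\ $(X,T)$ is $\tau\mathcal{F}$-mixing.

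Part (2) then follows by applying part (1) to the TDS $(X\times X,T\times T)$ with the family $\mathcal{F}$: this system is $\mathcal{F}$-transitive iff it is $\mathcal{F}_{\bullet}$-transitive, which says exactly that $(X,T)$ is $\mathcal{F}$-mixing iff it is $\mathcal{F}_{\bullet}$-mixing (the filter hypothesis is really only needed if one prefers to route the argument through $\tau\mathcal{F}$, combining part (3) with the identity $\tau\mathcal{F}=\mathcal{F}_{\bullet}$ of Theorem \ref{thick-filter}). I expect the main obstacle to be the linking lemma together with the nested-sets iteration that yields ``all finite powers are $\mathcal{F}$-transitive''; once that is available, everything reduces to the translation identity, monotonicity $A\subseteq A',\,B\subseteq B'\Rightarrow N(A,B)\subseteq N(A',B')$, and upward heredity of $\mathcal{F}$.
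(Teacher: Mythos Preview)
Your argument is correct. The paper itself gives no self-contained proof here: it cites \cite[Theorem~2.9]{our2} for (1) and (2) and remarks that (3) follows by the same method, so your write-up supplies precisely the details the paper omits. The translation identity $N(U,V)-k=N(T^{k}U,V)$ for (1) and the Furstenberg linking step for (3) (shrink one pair of opene sets so its return-time set lies inside an intersection of several, then iterate to get that every finite power of an $\mathcal{F}$-mixing system is $\mathcal{F}$-transitive) are the standard ingredients, and your passage from ``all powers $\mathcal{F}$-transitive'' to $N_{X^{2}}(\mathbf{U},\mathbf{V})\in\tau\mathcal{F}$ is clean.

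Two small remarks. First, the displayed justification of the translation identity has a slip: as written, ``$n-k\in N(U,V)$ iff $T^{n}U\cap V\neq\emptyset$'' is off by $k$; the intended chain is $m\in N(U,V)-k\iff m+k\in N(U,V)\iff T^{m+k}U\cap V\neq\emptyset\iff T^{m}(T^{k}U)\cap V\neq\emptyset\iff m\in N(T^{k}U,V)$. Second, your parenthetical observation is right and worth stating plainly: part (2) follows from part (1) applied to $(X\times X,\,T\times T)$ with no filter hypothesis at all, so the assumption in (2) is superfluous; the filter condition is only needed if one wishes to deduce (2) from (3) via $\tau\mathcal{F}=\mathcal{F}_{\bullet}$ (Theorem~\ref{thick-filter}).
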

\begin{proof}
See \cite[Theorem 2.9]{our2} for (1) and (2). Also, a similar proof  for (2) in \cite[Theorem 2.9]{our2} gives (3).
\end{proof}

By definition, $\mathcal{I}^*$ is a filter. Also,
  ${\Delta},\,\mathcal{IP},\,\mathcal{D}$ and
$\mathcal{C}$ are union of ultrafilters \cite{ber}, so their dual families
are filters. Therefore, all the families appearing
 in \eqref{mixingfamily} are filters and  $\mathcal{F}$-mixing can be defined for them.

 A \emph{homomorphism} (or a \emph{factor map}) $\pi: (X,\,T)\to (Y,\,S)$ is a continuous onto
map from $X$ to $Y$ such that $S\circ \pi=\pi\circ T$.  Then $(X,\,T)$ is said to be
an \emph{extension} of $(Y,\,S)$ and $(Y,\,S)$ is called a \emph{factor} of $(X,\,T)$. The extension $\pi$ is called \emph{almost one-to-one} if the set 
$\{x\in X: \pi^{-1}(\pi(x))=\{x\}\}$ is a dense $G_{\delta}$ subset of $X$ \cite{AGHSY}.
 
 In \cite[Theorem 2.10]{our2}, it is shown that any non-trivial factor of an $\mathcal{F}$-mixing system is also $\mathcal{F}$-mixing  
 {and in \cite[Lemma 3.6]{huang5}, it is shown that the almost 1-1 extension of an $\mathcal{F}$-transitive is $\mathcal{F}$-transitive.
  In fact, in \cite[Lemma 3.6]{huang5}, the authors show that for any  almost 1-1 factor map $\pi:(X,\,T)\to (Y,\,S)$ if we
  let $\mathcal{N}_T$ and $\mathcal{N}_S$ the family which is generated by return times set of $(X,\,T)$ and $(Y,\,S)$ respectively,
 then $\mathcal{N}_T=\mathcal{N}_S$. Now if $(Y,\,S)$ is an $\mathcal{F}$-mixing system, then by Theorem \ref{mixing}(3),  
 $\mathcal{N}_S$ is a thick family in $\mathcal{F}$ and by $\mathcal{N}_T=\mathcal{N}_S$ we have $\mathcal{F}$-mixing lifted to an almost one-to-one extension.}

\subsection{Spacing shifts}
Let $A=\{0,1\}$ and $\sigma: A^{\mathbb Z}\rightarrow
A^{\mathbb Z}$ be a shift map. Then $(A^{\mathbb Z},\, \sigma)$ is called \emph{the space of full shift on two symbols} and any closed subset
$\Sigma\subset A^{\mathbb Z}$ which is invariant under $\sigma$ is a
\emph{subshift} of $A^{\mathbb Z}$. A \emph{word}  $u=u_1u_2\cdots u_l$ appears in $x=\{x_i\}_{i\in\mathbb Z
}\in \Sigma$ at position $t$ if $x_{t+j-1}=u_j$ for $j=1,\cdots,
l$.  The set of all finite words that appear in some elements of
$\Sigma$ together with empty word $\epsilon$ is called the
\emph{language} of $\Sigma$ and is denoted by
$\mathcal{L}(\Sigma)$.

Now we recall the spacing shifts which are a valuable class in TDS providing examples for
different types of mixings.

\begin{defn}\label {isp}
Let $P$ be a subset of integers with $P=-P$.
 Define an \emph{invertible spacing shifts} to be the  subshift
 \begin{equation}\label{zsp}
\Sigma_{P}^{\mathbb Z} =\{s \in \{0,\,1\}^{\mathbb Z} : \
s_i=s_j=1  \ \Rightarrow  i-j \in P \cup
\{0\}\}.
\end{equation}
It follows from the definition that $P=N([1],\,[1])$.
\end{defn}

\begin{rem}
Definition \ref{isp} is an invertible version of the classical
spacing shifts which are defined for $P\subset \mathbb N$. We will denote them by $\Sigma_P^{\mathbb N}$ and then
\eqref{zsp} changes to
$$\Sigma_P^{\mathbb N}=\{s \in \{0,\,1\}^{\mathbb N}: \  s_i=s_j=1  \
\Rightarrow  |i-j| \in P\cup \{0\} \}.$$ See \cite{sp} for a
detailed study of $\Sigma_P^{\mathbb N}$. Note that if
$P^+:=\{p\in P: p\geq 0\}$ and $P^-:=-P^+$ then $P=P^+\cup P^-$
and $\mathcal{L}(\Sigma_P^{\mathbb Z})=
\mathcal{L}(\Sigma_{P^+}^{\mathbb N})$.
\end{rem}

\begin{lem}\label{NUV}
Suppose $U,\,V \in \mathcal L(\Sigma_P^{\mathbb Z})$. Then  there exist $k_1,\, k_2, \ldots,\,
k_n\in\mathbb Z$ such that $N(U,\,V)\supseteq \bigcap_{i=1}^{n}(P
- k_i)$.
\end{lem}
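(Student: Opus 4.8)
The plan is to reduce everything to the positions of the symbol~$1$ and to exhibit explicit points of $\Sigma_P^{\mathbb Z}$ that realise membership in $N(U,V)$. Identify the word $U$ with the cylinder $[U]=\{x\in\Sigma_P^{\mathbb Z}:x_{[0,|U|)}=U\}$, and likewise $V$ with $[V]$; since the return times set for cylinders based at other coordinates differs from the present one by a translation only, and a translate of an intersection $\bigcap_i(P-k_i)$ is again of that form, this normalisation costs nothing. Let $I\subset[0,|U|)$ and $J\subset[0,|V|)$ be the sets of positions at which $U$, respectively $V$, carries a $1$. Since $U,V\in\mathcal L(\Sigma_P^{\mathbb Z})$, any two distinct elements of $I$ (and of $J$) differ by an element of $P$. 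Finally, unwinding the definitions, $n\in N(U,V)$ if and only if there is a point of $\Sigma_P^{\mathbb Z}$ carrying $U$ at coordinate $0$ and $V$ at coordinate~$n$.

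First I would isolate the mechanism producing membership. For $n\in\mathbb Z$ let $x^{(n)}$ be the sequence that equals $1$ exactly on $I\cup(n+J)$ and $0$ elsewhere. If $n\in\bigcap_{i\in I,\,j\in J}\bigl(P-(j-i)\bigr)$, then every cross difference $(n+j)-i$ lies in $P$; since the differences internal to $I$ and to $J$ already lie in $P\cup\{0\}$, all pairwise differences of the $1$'s of $x^{(n)}$ lie in $P\cup\{0\}$, so $x^{(n)}\in\Sigma_P^{\mathbb Z}$. Moreover $x^{(n)}$ carries $U$ at coordinate $0$ and $V$ at coordinate $n$ as soon as the windows $[0,|U|)$ and $[n,n+|V|)$ are disjoint, i.e.\ as soon as $n\notin W:=\{-(|V|-1),\dots,|U|-1\}$, a fixed finite set. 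Hence every $n\notin W$ that lies in $\bigcap_{i\in I,\,j\in J}\bigl(P-(j-i)\bigr)$ already belongs to $N(U,V)$.

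The hard part will be the finitely many $n\in W$: the naive bound $N(U,V)\supseteq\bigcap_{i\in I,\,j\in J}(P-(j-i))$ is in general false (as one sees already with $U=101$, $V=1$ and $1\in P$), because a $1$ forced by one word may fall on a coordinate at which the other word demands a $0$, leaving no legal merge at that $n$. To repair this I would adjoin correction translates. Set $D:=W\setminus N(U,V)$, a finite set; for each $d\in D$ choose some $z\notin P$ — which is possible unless $P=\mathbb Z$, in which case $\Sigma_P^{\mathbb Z}$ is the full shift and already mixing, so there is nothing to prove — and put $k_d:=z-d$, so that $d+k_d=z\notin P$ and hence $d\notin P-k_d$. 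Then the finite intersection
\[\Bigl(\bigcap_{i\in I,\,j\in J}\bigl(P-(j-i)\bigr)\Bigr)\cap\Bigl(\bigcap_{d\in D}(P-k_d)\Bigr)\]
is contained in $N(U,V)$: any of its elements lying outside $W$ is in $N(U,V)$ by the preceding paragraph, and any lying inside $W$ cannot be a member of $D$ (it is excluded by the matching $P-k_d$), hence is in $N(U,V)$ by the definition of $D$. This is an intersection of translates of $P$, as claimed. The degenerate cases $I=\emptyset$ or $J=\emptyset$ are covered by the same scheme, the first intersection then being the empty intersection $\mathbb Z$ and the whole argument reducing to the correction over $W$. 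The single subtlety is thus exactly this finite overlap correction; it is harmless for the intended applications, since any shift-invariant filter containing $P$ also contains every such finite intersection of translates of $P$.
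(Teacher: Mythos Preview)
Your argument is correct and shares its core with the paper's: both rest on the observation that, once the two cylinder windows are disjoint, placing $1$'s exactly at $I\cup(n+J)$ gives a legitimate point of $\Sigma_P^{\mathbb Z}$ as soon as every cross-difference $n+j-i$ lies in $P$, so that $\bigcap_{i\in I,\,j\in J}(P-(j-i))$ already forces $n\in N(U,V)$ outside a finite window. The paper packages this by splitting into a positive-shift part $A$ and a negative-shift part $B$, each written as an intersection of translates of $P^{+}$ or $P^{-}$, and simply records that the remaining overlap part $C$ is finite; it then takes the union of the two lists of translates.

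Your treatment differs in two respects. First, your single construction $x^{(n)}$ handles positive and negative $n$ at once, so the $P^{\pm}$ split disappears. Second, and more substantively, your correction-translate device---adjoining $P-k_d$ with $d+k_d\notin P$ for each $d$ in the overlap window that fails to lie in $N(U,V)$---is an explicit step with no counterpart in the paper. This step is genuinely needed: your own example $U=101$, $V=1$ shows that the cross-difference intersection $\bigcap_{i,j}(P-(j-i))$ can contain overlap integers that are \emph{not} in $N(U,V)$, and the paper's proof does not really explain how its final intersection avoids them. So your version is the same idea carried out more carefully; the only cost is the harmless standing hypothesis $P\neq\mathbb Z$, without which no translate of $P$ can excise anything.
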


\begin{proof}
If either $U$ or $V$ is $0^i$ then $N(U,V)=\mathbb Z$. Otherwise,
we may assume
\begin{eqnarray}\label{UV}
&& U=0^{a_1}10^{p_1-1}10^{p_2-1}1\cdots10^{p_r-1}10^{b_1} \nonumber\\
&& V=0^{c_1}10^{q_1-1}10^{q_2-1}1\cdots 10^{q_t-1}10^{d_1}
\end{eqnarray}
where  $p_i,\, q_j  \in  P^+$  for $1\leq i \leq r, \,1\leq j \leq
t $ and
 $a_1, \,b_1, \,c_1,\, d_1 \in \mathbb N \cup \{0\}$ and there are $A,\, B$ and $C$ such that $N(U,\,V) = A \cup B
\cup C $ where
\begin{eqnarray*}
&& A=\left\{ |W| : \ VWU\in \mathcal L(\Sigma_P^{\mathbb Z})\right\} +|V|, \nonumber\\
&& B=\left\{-|W|:  \ UWV\in \mathcal L(\Sigma_P^{\mathbb Z })\right\}-|U|, \nonumber\\
&& |C|<\infty \,\,\,(|C|\leq |V|+|U|).
\end{eqnarray*}
In particular, $A\cup B\subseteq N(U,\, V)$.

Let $W\in \mathcal L(\Sigma_P^{\mathbb Z})$. If we replace some of
the 1's in $W$ with zero's and calling the new word $W'$, then
$W'\in \mathcal L(\Sigma_P^{\mathbb Z})$. Therefore, we may let
$$A= \{n\in\mathbb N: V0^nU\in\mathcal L(\Sigma_P^{\mathbb Z})\}+|V|$$
and the same for $B$. Suppose $n\in A$.  Then by the way $V$ and
$U$ are presented in (\ref{UV}), we must have $d_1+n+a_1\in
P^+$, that is, $n\in P^+-(d_1+n_1)$.  Note that $d_1+n+a_1$ is
the length of space between the last 1 in $V$ and the first 1 in $U$. In
fact, $d_i+n+a_j\in P^+$ where $d_i$ is the position of $i$'th 1
from right on $V$ and $a_j$ is the position of the $j$'th 1 from
left in $U$. Hence, $n\in P^+-(d_i+a_j)$ where $ 1\leq i \leq t,
\, 1\leq j\leq r$. Let $\{l_1,\cdots, l_m\}=\{d_i+a_j+|V|: \,
1\leq i \leq t, \,1\leq j\leq t \}$ for appropriate $m$. This
means that $A=\cap_{i=1}^{m}(P^+-l_i)$. Similarly,
$B=\cap_{i'=1}^{m'}(P^--l'_{i'})$ and the proof is complete by
setting $\{k_1,\cdots, k_n\}=\{l_i: 1\leq i \leq m\}\cup
\{l'_{i'}: 1\leq i'\leq m'\}$.
\end{proof}

\begin{cor}\label{tran-sigma}
If $P\in\tau\F$ then $\Sigma_P^{\mathbb Z}$ is $\F$-transitive. 
\end{cor}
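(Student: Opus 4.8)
The goal is to show that if $P \in \tau\F$, then for any two opene sets in $\Sigma_P^{\mathbb Z}$ the return times set lies in $\F$. The natural strategy is to reduce to cylinder sets, apply Lemma \ref{NUV}, and then use the defining property of $\tau\F$ together with upward-hereditariness of $\F$.

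First I would recall that a basis for the topology on the subshift $\Sigma_P^{\mathbb Z}$ is given by cylinders determined by words $U \in \mathcal L(\Sigma_P^{\mathbb Z})$, so it suffices to check the $\F$-transitivity condition on such cylinders; moreover $N(U,V)$ for cylinders coincides (up to the harmless finite-shift bookkeeping already handled in Lemma \ref{NUV}) with the combinatorial return set there. Next, given words $U, V \in \mathcal L(\Sigma_P^{\mathbb Z})$, Lemma \ref{NUV} supplies integers $k_1, \dots, k_n \in \mathbb Z$ with
\[
N(U,V) \supseteq \bigcap_{i=1}^{n}(P - k_i).
\]
Now invoke the hypothesis $P \in \tau\F$: by the definition of the thick family $\tau\F$, for any finite set $\{k_1,\dots,k_n\} \subset \mathbb Z$ we have $(P-k_1)\cap\cdots\cap(P-k_n) \in \F$. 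Hence $N(U,V)$ contains a member of $\F$, and since $\F$ is hereditary upward, $N(U,V) \in \F$. As this holds for every pair of opene (basic) sets, $\Sigma_P^{\mathbb Z}$ is $\F$-transitive.

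The only point requiring a little care — and the main (minor) obstacle — is the passage from arbitrary opene sets to cylinder sets and the matching of the topological return set $N(U,V)$ with the combinatorial quantity estimated in Lemma \ref{NUV}: one must note that any opene set contains a cylinder, that enlarging $V$ or $U$ only shrinks $N$, and that $N(U',V') \subseteq N(U,V)$ when $U',V'$ are the cylinder words, so an $\F$-member inside $N(U',V')$ is also inside $N(U,V)$. Everything else is immediate from the definitions of $\tau\F$ and of an upward-hereditary family, so no genuinely hard step arises; the corollary is essentially a direct translation of Lemma \ref{NUV} through the defining property of $\tau\F$.
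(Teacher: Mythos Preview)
Your proposal is correct and follows exactly the route the paper intends: the corollary is stated without proof, as an immediate consequence of Lemma~\ref{NUV} together with the definition of $\tau\F$ and upward-hereditariness of $\F$, and this is precisely the argument you spell out (the same reasoning reappears explicitly in the converse direction of Theorem~\ref{Fmixing}). One minor slip: you write that ``enlarging $V$ or $U$ only shrinks $N$,'' which is backwards---enlarging the sets can only enlarge $N(U,V)$---but the inclusion $N(U',V')\subseteq N(U,V)$ you then use is stated correctly, so the argument is unaffected.
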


Combinatorial properties of $P$ could lead to dynamical
properties for $\Sigma_P^{\mathbb N}$ or $\Sigma_P^{\mathbb Z}$.
For instance, $\Sigma_P^{\mathbb N}$ is mixing if and only if $P$
is cofinite and it is weak mixing if and only if $P$ is thick
\cite{sp}. The same is true for $\Sigma_P^{\mathbb Z}$. 

\begin{thm}\label{Fmixing}
   $\Sigma_P^{\mathbb Z}$
is $\mathcal F$-mixing if and only if $P \in \tau
{\mathcal  F}$.
\end{thm}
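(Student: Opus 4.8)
The plan is to prove the two implications separately; all of the substantive inputs are already available (Lemma~\ref{NUV} and Corollary~\ref{tran-sigma}, Theorem~\ref{mixing}(3), and the fact recalled just above that $\Sigma_P^{\mathbb Z}$ is weak mixing precisely when $P$ is thick), so the task is only to assemble them.

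For the implication $P\in\tau\F\Rightarrow\Sigma_P^{\mathbb Z}$ is $\F$-mixing, I would first record the inclusion $\tau\F\subseteq\mathcal T$: if $F\in\tau\F$, then for every $m\in\N$ the set $F\cap(F-1)\cap\cdots\cap(F-m)$ belongs to $\F$, hence is nonempty since every member of $\F$ is nonempty, and because this set equals $\{n\in\Z:[n,n+m]\subseteq F\}$ it forces $F$ to contain an interval of length $m+1$; as $m$ is arbitrary, $F$ is thick. In particular $P$ is thick, so $\Sigma_P^{\mathbb Z}$ is weak mixing, while Corollary~\ref{tran-sigma} already gives that $\Sigma_P^{\mathbb Z}$ is $\F$-transitive; since a TDS is $\F$-mixing exactly when it is weak mixing and $\F$-transitive, the conclusion follows. (An alternative that bypasses weak mixing: for basic opene sets $U_1\times U_2$ and $V_1\times V_2$ of $\Sigma_P^{\mathbb Z}\times\Sigma_P^{\mathbb Z}$ one has $N(U_1\times U_2,\,V_1\times V_2)=N(U_1,V_1)\cap N(U_2,V_2)$, which by Lemma~\ref{NUV} contains a finite intersection of translates of $P$; that intersection lies in $\F$ because $P\in\tau\F$, and hence so does $N(U_1\times U_2,\,V_1\times V_2)$ since $\F$ is hereditary upward, which suffices on a basis.)

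For the converse, suppose $\Sigma_P^{\mathbb Z}$ is $\F$-mixing. By Theorem~\ref{mixing}(3) it is then $\tau\F$-mixing, hence in particular $\tau\F$-transitive (any $\mathcal G$-mixing system is $\mathcal G$-transitive). Applying $\tau\F$-transitivity to the opene set $U=V=[1]$ and using $N([1],[1])=P$ from Definition~\ref{isp}, we obtain $P\in\tau\F$.

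The only points needing a little care are the inclusion $\tau\F\subseteq\mathcal T$ — which is exactly where the standing assumption that $\F$ consists of nonempty sets is used — and the routine remark that $\F$-transitivity of a product system may be checked on basic opene sets. I do not expect a genuine obstacle, because the delicate content has already been isolated in Lemma~\ref{NUV} and Theorem~\ref{mixing}; were Theorem~\ref{mixing}(3) not at our disposal, the converse would be the hard direction, since one would then have to realize arbitrary finite intersections $\bigcap_i(P-k_i)$ as return-time sets $N(U,V)$, and this is delicate precisely because the positions of the $1$'s in admissible words $U,V$ are themselves constrained to have pairwise differences in $P$.
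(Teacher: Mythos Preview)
Your proof is correct and uses the same ingredients as the paper --- Lemma~\ref{NUV} for sufficiency and Theorem~\ref{mixing}(3) together with $P=N([1],[1])$ for necessity. The only cosmetic difference is in the sufficiency direction: the paper invokes the characterization ``$\mathcal F$-mixing $\Leftrightarrow$ $N(U,V)\cap N(U,U)\in\mathcal F$ for all opene $U,V$'' and checks this directly from Lemma~\ref{NUV} (so weak mixing never needs to be mentioned), whereas your main route first extracts $\tau\F\subseteq\mathcal T$ to get weak mixing and then cites Corollary~\ref{tran-sigma}; your parenthetical alternative via basic opene sets in the product is essentially the paper's argument.
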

\begin{proof}
 The necessity is a  consequence of  Theorem
 \ref{mixing}.
For the converse,  $P \in {\tau\mathcal  F}$ if and only if
any intersection of finite shifts of $P$ is in $\mathcal F$.
Applying Lemma~\ref{NUV},  $N(U,\,V)\supseteq
\bigcap_{i=1}^{n}(P - k_i)$ and $N(U,\,U)\supseteq
\bigcap_{i=1}^{m}(P - k'_i)$ for some $k_i,\,k'_j$.  So  $N(U,V)\cap N(U,\,U)$ is in
$\mathcal F$ which means $\Sigma_P^{\mathbb Z}$ is $\mathcal
F$-mixing.
\end{proof}
Therefore, if
$P\in\{\mathcal{I}^*_\bullet,\,\Delta^*_\bullet,\,\mathcal{IP}^*_\bullet,\,\mathcal{D}^*_\bullet,\,\mathcal{C}^*_\bullet\}$,
then  $\Sigma_P^{\mathbb{Z}}$ is $\mathcal{F}$-mixing for the
respective $\mathcal{F}\in
\{\mathcal{I}^*,\,\Delta^*,\,\mathcal{IP}^*,\,\mathcal{D}^*,\,\mathcal{C}^*\}$.
The classical mixing is the $\mathcal{I}^*$-mixing, so
we just say mixing instead of $\mathcal{I}^*$-mixing.

 In \cite{ber}, it has been shown that not all the mixings defined for the families in
$\{\mathcal{I}^*,\,\Delta^*,\,\mathcal{IP}^*,\,\mathcal{D}^*,\,\mathcal{C}^*\}$
are different in the measure theoretic case. See also   section~\ref{mds} for the equivalencies of some of these mixings for certain classes in TDS.
 However, as
one may expect, they are different families in a general TDS as the next
result shows.
\begin{thm}\label{F^*-mixing}
The following hierarchy is proper.
\begin{equation}\label{FirstRow}
\text{mixing}\subset\Delta^*\text{-mixing}\subset\mathcal{IP}^*\text{-mixing}\subset\mathcal
D^*\text{-mixing} \subset\mathcal{C}^*\text{-mixing}\subset\text{weak mixing} 
\end{equation}
\end{thm}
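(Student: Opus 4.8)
The plan is to split the statement into two unequal halves: the chain of inclusions, which is formal, and the strictness, which is where the content lies. For the inclusions, note that if $\mathcal{F}\subseteq\mathcal{F}'$ then $N(U,V)\in\mathcal{F}$ forces $N(U,V)\in\mathcal{F}'$, so an $\mathcal{F}$-transitive system is $\mathcal{F}'$-transitive and hence an $\mathcal{F}$-mixing system is $\mathcal{F}'$-mixing; applying this to \eqref{mixingfamily} gives the first four inclusions, and $\mathcal{C}^*$-mixing $\subseteq$ weak mixing is immediate because an $\mathcal{F}$-mixing system is by definition weak mixing. From here I would only have to establish strictness, and I would realize each witness as an invertible spacing shift.

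The reduction I would use: by Theorem~\ref{Fmixing} and Theorem~\ref{thick-filter} (each of the five families being a filter), $\Sigma_P^{\mathbb Z}$ is $\mathcal{F}^*$-mixing exactly when $P\in(\mathcal{F}^*)_\bullet=(\mathcal{F}_+)^*$, and since $\mathcal I,\Delta,\mathcal{IP},\mathcal D,\mathcal C$ are hereditary upward this is equivalent to the condition that $\mathbb Z\setminus P$ contain no translate of any member of $\mathcal F$. Together with the known equivalences ``$\Sigma_P^{\mathbb Z}$ is strong mixing iff $P$ is cofinite'' and ``$\Sigma_P^{\mathbb Z}$ is weak mixing iff $P$ is thick'', the six classes of \eqref{FirstRow} read, from left to right, as six conditions on $P$: that $\mathbb Z\setminus P$ be finite; that no translate of $\mathbb Z\setminus P$ contain a $\Delta$-set; the same with $IP$-sets; the same with $D$-sets; the same with central sets; and that $P$ be thick. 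It therefore suffices, for each consecutive pair, to produce a symmetric set $P=-P$ satisfying the later of the two conditions but not the earlier.

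I would supply these witnesses from Hindman's table. For $\text{mixing}\subsetneq\Delta^*$-mixing, take $\mathbb Z\setminus P$ to be an infinite symmetric Sidon set $B$ (e.g.\ $\{0\}\cup\{\pm a_n\}$ with $(a_n)$ super-increasing): $P$ is not cofinite, and no translate of $B$ contains a $\Delta$-set, because a translate of a Sidon set is Sidon and, if $\Delta(S)$ sat inside a Sidon set, the Sidon relation applied to the quadruples $\{s_1,s_2,s_3,s_4\}$ and $\{s_1,s_2,s_3,s_5\}$ (drawn from any five terms $s_1<\cdots<s_5$ of $S$) would force $s_4-s_3=s_2-s_1=s_5-s_3$, which is impossible. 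For $\Delta^*\subsetneq\mathcal{IP}^*$-mixing, take $\mathbb Z\setminus P$ to be a sufficiently lacunary symmetric $\Delta$-set none of whose translates contains an $IP$-set. For $\mathcal{IP}^*\subsetneq\mathcal D^*$-mixing, take the symmetrization of an $IP$-set of zero upper Banach density, such as $FS(\{10^n\})$: it contains an $IP$-set, yet no translate of it can contain a $D$-set since every $D$-set has positive upper Banach density. For $\mathcal D^*\subsetneq\mathcal C^*$-mixing, take the symmetrization of a $D$-set that is not piecewise syndetic, so that no translate of $\mathbb Z\setminus P$ contains a central set (central sets being piecewise syndetic). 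Finally, for $\mathcal C^*\subsetneq\text{weak mixing}$, take $P$ symmetric with both $P$ and $\mathbb Z\setminus P$ thick: then $\Sigma_P^{\mathbb Z}$ is weak mixing, while $\mathbb Z\setminus P$, being thick, is a member of a minimal idempotent and hence central, so $\Sigma_P^{\mathbb Z}$ is not $\mathcal C^*$-mixing.

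The main obstacle I anticipate is not the existence of these combinatorial sets, which is classical, but the verification that each chosen complement, after symmetrization, has \emph{no} translate lying in the forbidden family: the shift-invariant (``$\bullet$'') versions are genuinely stronger than the plain families, so the witnessing sets must be rigid enough that translation cannot manufacture the forbidden structure. The hardest case should be $\mathcal D^*\subsetneq\mathcal C^*$-mixing, which hinges on producing, and controlling the translates of, a non-piecewise-syndetic $D$-set; the other cases rest on lacunarity, Sidon-type relations, or density bounds, all of which behave well under translation, so they should be routine by comparison.
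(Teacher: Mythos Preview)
Your approach is essentially the same as the paper's: both reduce each strict inclusion to finding a symmetric $P$ with $P\in\mathcal F^*_\bullet\setminus\mathcal G^*_\bullet$ (via Theorem~\ref{Fmixing} and Theorem~\ref{thick-filter}) and then exhibit the appropriate spacing shift $\Sigma_P^{\mathbb Z}$. The only differences are cosmetic: for $\text{mixing}\subsetneq\Delta^*$-mixing the paper takes $\mathbb Z\setminus P$ to be the (symmetrized) squares rather than a Sidon set, and for the three middle inclusions the paper simply invokes the existence results from \cite{ber} (sets that are $\mathcal{IP}^*_\bullet$ but not $\Delta^*_+$, $\mathcal D^*_\bullet$ but not $\mathcal{IP}^*_+$, $\mathcal C^*_\bullet$ but not $\mathcal D^*_+$) instead of sketching constructions as you do; your final witness for $\mathcal C^*\text{-mixing}\subsetneq\text{weak mixing}$ is identical to the paper's.
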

\begin{proof}
We choose our examples from spacing shifts. First, we show that
there is a $\Delta^*$-mixing spacing shifts which is not 
mixing.
 Let $E=\{n^2: n\in\mathbb N\}$ and note that $E$ is not a
 $\Delta$-set and set $P^+=\mathbb N\setminus E$. Then $P$ is $\Delta^*_{\bullet}$, and as a result
$\Sigma_P^\mathbb{Z}$ is $\Delta^*$-mixing.  However, it cannot be  mixing
because $P$ is not cofinite.

For the remaining cases, recall first that there are examples of
subsets of integers which are $ C^*_{\bullet}$ but not $D^*_+$, $
D^*_{\bullet}$ but not $ IP^*_+$ and $ IP^*_{\bullet}$ but not $
\Delta^*_+$ \cite{ber}.  This  implies that the associated
spacing shifts are different.

To show that there exists weak mixing system which is not $\mathcal{C}^*$-mixing 
notice that   $\mathcal{PS}=(\mathcal{C}_+)$ \cite{ber}, thus
\begin{equation}\label{TS}
\mathcal{C}^*_{\bullet}=(\mathcal{C_+})^*=
(\mathcal{PS})^*=\mathcal{TS}.
\end{equation}
 In particular,  $\mathcal{TS}$  is a filter. Now if we pick $E\subset \Z$ so that both $E$ and $E^c$ are thick sets, then $E$ is not $C^*$ and therefore, $\Sigma_P^{\Z}$ is a weak mixing which is not $\mathcal{C}^*$-mixing.
\end{proof}
{
Let us recall that in MDS mixing and $\Delta^*$-mixing in one side and $\mathcal{D}^*$-mixing and weak mixing (and so $\mathcal{C}^* $-mixing as well) in the other side were the same \cite{ber}.

By definition a TDS is mild mixing iff it is $(\mathcal{IP-IP})^*$-transitive. This motivates us 
to consider the following diagram, an extension of \eqref{FirstRow},  for achieving a 
 possible classification in  TDS via these mixings and transitivities concepts. 
}
\begin{eqnarray}\label{diag}
\begin{array}{ccccccccccc}
&\!\!\!\!\!\!  \mathcal{I}^*  &\!\!\!\!\subset &\!\!\!\! {\Delta}^*
&\!\!\!\! \subset  &\!\!\!\! \mathcal{IP}^*  &\!\!\!\! \subset
&\!\!\!\!  \mathcal{D}^*  &\!\!\!\!  \subset &\!\!\!\!
 \mathcal{C}^*&\\
&\!\!\!\!\!\cap&\!\!\!\! \ &\!\!\!\!\cap&\!\!\!\!\ &\!\!\!\!  \cap &\!\!\!\!\ &\!\!\!\! \cap &\!\!\!\!\ &\!\!\!\!\! \cap \!\\
\ \ \ \ \ \Delta^*=&\!\!\!\! (\mathcal{I-I})^* &\!\!\!\! \subset  &\!\!\!\!
({\Delta-\Delta})^* &\!\!\!\! \subset &\!\!\!\!
 (\mathcal{IP-IP})^*&\!\!\!\! \subset &\!\!\!\!  (\mathcal{D-D})^* &\!\!\!\! \subset &\!\!\!\!
 (\mathcal{C-C})^*.
 \\
\end{array}
\end{eqnarray}

We showed in Theorem \ref{Fmixing} that how different mixings can be defined for the first
row. The families in the second row are not necessarily filters
and we  may only define  transitivity for them.
The above list in \eqref{diag} is rather complete with respect to 
 Hindman's table \cite{Hind}; for as we will point out later, $(\mathcal{S-S})^*=(\mathcal{C-C})^*=(\mathcal{PS-PS})^*$ and  
$( \mathcal{F}_{pubd}-\mathcal{F}_{pubd})^*=(\mathcal{D-D})^*$.
  Now  the first
problem to be set is if they actually represent different
families in TDS; though from set theoretical point of view, they are all different families. Recall that  whether there is a
$(\mathcal{C-C})^*$-transitive system which is not $(\mathcal{D-D})^*$-transitive is
yet an open problem first raised in  \cite{huang4}.
 Except this
case, we will show in sequel that all other families in \eqref{diag} represent
different transitive TDS.
One case is already known: there exists $(\mathcal{IP-IP})^*$-transitive which is not $\mathcal{IP}^*$-mixing \cite[Example C]{huang3}.
Other cases are treated in Theorem~\ref{filter} and Lemma~\ref{C-C}.

We also look for the thick families inside the families in the
second row in \eqref{diag}. By Theorem~\ref{mixing}, these thick families define some other mixings.
 For instance, any thick families in the
$(\mathcal{IP}-\mathcal{IP})^*$-transitive family will be a mild
mixing system. Then  we   investigate how these new mixing families
are different from those on the first row.

\section{ Characterization of $(\mathcal{F-F})^*$-transitive and
$\mathcal{F}^*$-mixing}\label{tran-sec}

An increasing set  $\{a_n\}_{n\in \N}$
is said to have \emph{progressive gaps} if it contains a subsequence $\{a_{n_k}\}$ (call each finite subset $\{a_{n_k+1},\, a_{n_k+2},\,\ldots,\, a_{n_{k+1}}\}$ a \emph{chunk})
 such that for $n_k+1 < i \leq n_{k+1}$ one
has $a_i- a_{i-1} > a_{n_{k+1}}- a_i$ (inside each chunk every gap is larger than the
 distance to the right end of the chunk) and $a_{n_{k+1}}-a_{n_k}\to\infty$ (the gaps
 between the chunks tend to infinity) \cite{ber}.


 \begin{lem}\label{pdel}
There exists a set $B\subset \N$ such that $\Delta(B)$ is not a
$(\Delta-\Delta)$-set and $(\Delta(B)-\Delta(B))$ is not an  $IP$-set.
\end{lem}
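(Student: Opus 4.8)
The plan is to build $B$ explicitly as an increasing sequence in $\N$ whose difference set $\Delta(B)$ is "combinatorially sparse" in two nested senses: sparse enough that it contains no $(\Delta-\Delta)$-set, and with $\Delta(B)-\Delta(B)$ still not an $IP$-set. The natural device is a rapidly growing sequence with progressive gaps, since the progressive-gap construction from \cite{ber} is exactly engineered to defeat $IP$-structure inside difference sets. First I would fix a very fast-growing sequence, e.g.\ something like $b_n$ defined recursively so that $b_{n+1}$ exceeds the sum of all previously formed quantities (all finite sums of pairwise differences $b_i - b_j$ with $i,j \le n$), and set $B = \{b_n\}$. The point of such lacunarity is that every element of $\Delta(B)$ is dominated in size by the single "top" difference that produced it, so $\Delta(B)$ itself inherits a progressive-gap-type structure and in particular is not syndetic-like in any of the relevant families.

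The first claim, that $\Delta(B)$ is not a $(\Delta-\Delta)$-set, I would prove by contradiction: suppose $\Delta(B) \supseteq \Delta(S) - \Delta(S)$ for some sequence $S$. Since $0 \in \Delta(S)-\Delta(S)$ trivially and the set $\Delta(S)-\Delta(S)$ is symmetric and contains arbitrarily many "nested sums" $\pm(s_i - s_j) \pm (s_k - s_l)$, one can extract two distinct elements $d_1, d_2 \in \Delta(B)$ with $d_1 + d_2 \in \Delta(B)$ or $d_1 - d_2 \in \Delta(B)$; but the growth condition on $B$ makes this impossible once $d_1, d_2$ are large, because a sum/difference of two differences of the $b_n$ cannot again be a difference of two $b_m$ unless the indices essentially collapse. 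Making this counting precise — controlling which index patterns can coincide — is the routine-but-fiddly core; the lacunarity bound $b_{n+1} > \sum (\text{earlier stuff})$ is chosen precisely so the only solutions are degenerate.

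For the second claim, that $\Delta(B) - \Delta(B)$ is not an $IP$-set, I would again argue by contradiction using the characterization that an $IP$-set contains $FS(A)$ for some sequence $A = \{a_n\}$ of nonzero integers, equivalently belongs to an idempotent ultrafilter. If $FS(A) \subseteq \Delta(B) - \Delta(B)$, then in particular all the partial sums $a_1, a_1 + a_2, a_1+a_2+a_3, \dots$ and $a_2, a_3, \dots$ lie in $\Delta(B)-\Delta(B)$, i.e.\ each is a difference of two elements of $\Delta(B)$, hence (expanding) a $\pm\pm$ combination of four elements of $B$. Now I would use the progressive-gaps / super-lacunarity of $B$ to show that the set $\Delta(B) - \Delta(B)$, though it accumulates more index patterns than $\Delta(B)$, still has gaps that grow fast enough to block the additive closure an $IP$-set forces: one produces an element of $FS(A)$ that is trapped strictly between two consecutive "reachable" values of $\Delta(B)-\Delta(B)$, a contradiction. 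Concretely, I would track the largest index $b_N$ appearing in the representation of a given partial sum and show that adding the next generator $a_{n+1}$ either increases that index (forcing exponential growth incompatible with staying in the set) or keeps it bounded (forcing infinitely many partial sums into a finite set, contradicting that the $a_n$ are nonzero).

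\textbf{Main obstacle.} The hard part will be the bookkeeping in the second claim: $\Delta(B)-\Delta(B)$ is a four-fold $\pm$ combination of elements of $B$, so the "collapse of indices" analysis has more cases than for $\Delta(B)$ alone, and one must verify that the chosen growth rate of $B$ is strong enough to still force all near-solutions to be degenerate while the $IP$-structure keeps generating genuinely new, large elements. I expect one needs to choose $B$ with progressive gaps in the precise sense defined just before the lemma (the chunk/right-end condition), rather than mere lacunarity, so that \cite{ber}'s lemma about progressive gaps and $IP$-sets can be invoked as a black box for the decisive step instead of re-deriving the estimate by hand.
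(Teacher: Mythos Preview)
Your construction (a super-lacunary $B=\{b_n\}$ with $b_n>4\sum_{i<n}b_i$) is exactly the paper's, and your overall instinct for both claims---force an index-collapse contradiction---is correct in spirit. But the first claim as you sketch it has a genuine gap, and the second is vaguer than necessary.

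For the first claim, the contradiction you aim for---that large $d_1,d_2\in\Delta(B)$ cannot satisfy $d_1\pm d_2\in\Delta(B)$ unless indices collapse---is not a contradiction at all: $\Delta(B)$ is full of telescoping triples, e.g.\ $d_1=b_n-b_k$, $d_2=b_m-b_k$, $d_1-d_2=b_n-b_m$, for $n,m,k$ as large as you like. Index collapse is the \emph{generic} behaviour here, not the forbidden one, and nothing in the bare assumption $\Delta(S)-\Delta(S)\subseteq\Delta(B)$ hands you a non-collapsed triple. The paper avoids this trap by arguing structurally rather than one identity at a time: it first checks that $A=\Delta(B)$ has progressive gaps, together with the sharper property that for each fixed $d$ the places in $A$ where gap $d$ occurs are either finite or drift apart. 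It then shows that \emph{no} set of the form $L=\Delta(S)-\Delta(S)$ can sit inside such a set, because for fixed $j<t<r$ the four elements $s_n-s_r$, $s_n-s_t$, $(s_n-s_j)-(s_r-s_t)$, $s_n-s_j$ all lie in $L$ for every $n>r$, stay at mutually bounded distances, and march off to infinity as $n\to\infty$. That persistent clustering of close pairs is the obstruction, not a single additive identity among three elements.

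For the second claim your dichotomy is the right instinct, but the horn ``top index unbounded forces growth incompatible with staying in the set'' does not close as stated, since $\Delta(B)-\Delta(B)$ itself contains arbitrarily large elements. The paper makes the collapse sharp and direct: from an $IP$-set $F\subseteq\Delta(B)-\Delta(B)$ one first argues (using that $\Delta(B)$ has progressive gaps, hence cannot contain an $IP$-set) that infinitely many elements of $F$ lie in $(\Delta(B)-\Delta(B))\setminus\Delta(B)$; this lets one choose $f_1,f_2\in F$ whose eight $b$-indices are strictly decreasing, say $f_1=(b_{r_1}-b_{r_2})-(b_{r_3}-b_{r_4})$ and $f_2=(b_{r_5}-b_{r_6})-(b_{r_7}-b_{r_8})$ with $r_1>r_2>\cdots>r_8$. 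Writing $f_3=f_1+f_2=(b_{t_1}-b_{t_2})-(b_{t_3}-b_{t_4})$, the growth bound $b_n>4\sum_{i<n}b_i$ forces $t_1=r_1$, $\{t_2,t_3\}=\{r_2,r_3\}$, $t_4=r_4$, whence $f_3=f_1$ and $f_2=0$, contradiction. No black box from \cite{ber} is invoked for this step; the progressive-gaps fact is used only to guarantee the disjoint-index choice of $f_1,f_2$ is available.
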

\begin{proof}
Suppose $B=\{b_n\}_{n\in \mathbb N}$ is a rapidly growing sequence of
natural numbers, for instance, let  
$b_n>4 \sum_{i=1}^{n-1}b_i$ ($b_n>4 \sum_{i=1}^{n-1}(b_i+i)$ was considered in  \cite[Example C]{huang3}).
Let $A=\Delta(B)$ and observe that $A$ has progressive gaps. Also, for a certain distance $d$, $A_d=\{a_{i_j}\in A: a_{i_j+1}-a_{i_j}=d \}$
is either  finite or  $\lim_{j\to\infty}(a_{i_{j+1}}-a_{i_j})=\infty$.  This is implied by the fact that   the distance $d$ between any two elements must eventually occur only inside the chunks of $A$ and
then only once in every chunk. 

To show that $\Delta(B)$ is not a
$(\Delta-\Delta)$-set, it suffices to show  that  for any arbitrary increasing sequence $S\subset \N$,  $L=(\Delta(S)-\Delta(S))$ cannot have progressive gaps.

First, 
since $ s_j-s_i=(s_j-s_k)-(s_k-s_i)\in L$, $\Delta(S)\subset L$. 
Now pick any $r,\,t,\,j$; $r>t>j$ and note that for any $n>r$
    we have $(s_n-s_r),\,(s_n-s_t),\,(s_n-s_j)$ and $(s_n-s_j)-(s_r-s_t)$  in $L$ and  
\begin{equation}\label{delta-delta}
d=(s_n-s_j)-(s_n-s_t)=[(s_n-s_j)-(s_r-s_t)]-(s_n-s_r).
\end{equation}
This means that for any $n$,  $\lim_{n\to\infty}([(s_n-s_j)-(s_r-s_t)]-(s_n-s_t))=(s_t-s_j)\neq\infty$ and this in turn  shows that
 $L$ does not have progressive gaps.

For the second part assume that $(\Delta(B)-\Delta(B))$ contains an $IP$-set $F\subseteq\N$. Since $\Delta(B)\subset (\Delta(B)-\Delta(B))$ and  $\Delta(B)$ has 
 progressive gaps,  infinitely many members of $F$ are in $(\Delta(B)-\Delta(B))\setminus \Delta(B)$. 
This enables us to choose $f_1,\,f_2\in F\cap ((\Delta(B)-\Delta(B))\setminus \Delta(B))$ 
so that if 
$f_1=(b_{r_1}-b_{r_2})-(b_{r_3}-b_{r_4}),\,f_2=(b_{r_5}-b_{r_6})-(b_{r_7}-b_{r_8})$, then  $b_{r_{i+1}}<b_{r_i}$, $1\leq i\leq 7$. 
Let $f_3=f_1+f_2=(b_{t_1}-b_{t_2})-(b_{t_3}-b_{t_4})\in F$. 
Then by the way $B$ has been choosen, $r_1=t_1,\,\{r_2,\,r_3\}=\{t_2,\,t_3\}$ and $r_4=t_4$. This means that 
 $f_2$ must be zero which is absurd. 
\end{proof}

Using the concept of chunks,  Bergelson--Downarowicz
 prove that there exists an $IP^*_{\bullet}$-set which is not $\Delta^*_+$ \cite[Theorem 2.11]{ber}. To prove this fact, they show that any $IP$-set, likewise any  $(\Delta-\Delta)$-set,
 cannot have  progressive gaps. Then they consider a set $E$  as a union over all integers $k$ of shifted by $r_k$ of $\Delta$-sets $E_k$ such that $E$ has progressive gaps, 
and as a result it cannot contain any  shifted $IP$-set.
 This argument shows that $E$ does not have any $(\Delta-\Delta)$-set either. 
Now, the
 complement of such $E$ is $(\Delta-\Delta)^*_{\bullet}$ and not $\Delta^*_+$. See \cite[proof of Theorem 2.11]{ber} for the construction of $E$.
 So we have:
\begin{lem}\label{our}
 There exists a ${(\Delta-\Delta)}^*_{\bullet}$-set which is not
${\Delta}^*_{\bullet}$-set.
 \end{lem}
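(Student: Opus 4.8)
The plan is to deduce Lemma~\ref{our} directly from Lemma~\ref{pdel} together with the Bergelson--Downarowicz construction sketched in the paragraph immediately preceding the statement, so that the ``new'' content is really the existence of a single set $E$ with the right combination of properties. First I would fix notation: following \cite[proof of Theorem 2.11]{ber}, one builds $E\subset\Z$ as a union $E=\bigcup_{k\in\Z}(E_k+r_k)$, where each $E_k$ is a $\Delta$-set (in fact $E_k=\Delta(B)$ for the rapidly growing $B$ from Lemma~\ref{pdel}, or a suitable variant), the shifts $r_k$ are chosen sparse enough that the union still has progressive gaps, and the blocks coming from different $k$ do not interfere. The point of routing through Lemma~\ref{pdel} is that we now know $\Delta(B)$ is simultaneously not a $(\Delta-\Delta)$-set \emph{and} that $\Delta(B)-\Delta(B)$ is not an $IP$-set; the first fact is what was used in \cite{ber}, and it is exactly ``progressive gaps obstruct $(\Delta-\Delta)$'' that makes the construction go through for the dual family $(\Delta-\Delta)^*$ rather than merely $\Delta^*$ or $\mathcal{IP}^*$.

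The key steps, in order, are: (1) recall from \cite{ber} (and re-derive the one-line argument given in the excerpt via \eqref{delta-delta}) that no $(\Delta-\Delta)$-set has progressive gaps, hence no shifted $(\Delta-\Delta)$-set $F+m$ is contained in any set that has progressive gaps; (2) invoke the construction of $E$ from \cite[proof of Theorem 2.11]{ber}, observing that it is a shift-invariant-style union of $\Delta$-sets engineered precisely so that $E$ has progressive gaps; (3) conclude that $E$ contains no shifted $(\Delta-\Delta)$-set, i.e.\ $E\notin(\Delta-\Delta)_+$, equivalently $E^c\in((\Delta-\Delta)_+)^*=((\Delta-\Delta)^*)_\bullet=(\Delta-\Delta)^*_\bullet$; (4) on the other hand, $E$ itself \emph{does} contain $\Delta(B)$ (it is a union of $\Delta$-sets, each of which is a $\Delta$-set hence in $\Delta$), and more carefully one checks that $E$ contains a shifted $\Delta$-set, so $E\in\Delta_+$, whence $E^c\notin(\Delta_+)^*=\Delta^*_\bullet$. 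Putting (3) and (4) together, $E^c$ is a $(\Delta-\Delta)^*_\bullet$-set that is not a $\Delta^*_\bullet$-set, which is the assertion.

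For the passage between ``contains no shifted $(\Delta-\Delta)$-set'' and membership in the dual family, I would use the identities already recorded in Section~\ref{pre}: $\mathcal{F}^*_\bullet=(\mathcal{F}_+)^*$, applied with $\mathcal{F}=\Delta-\Delta$ and with $\mathcal{F}=\Delta$. A set $G$ fails to be in $\mathcal{F}_+$ exactly when $G$ contains no translate of any set in $\mathcal{F}$; its complement $G^c$ then meets every translate of every $\mathcal{F}$-set iff $G$ itself misses one, so care with complements is needed and I would write out the quantifiers once to be safe. The only genuinely substantive input beyond \cite{ber} is Lemma~\ref{pdel}, which guarantees that the building block $\Delta(B)$ has the two negative properties needed for the construction; everything else is bookkeeping with the $\mathcal{F}_+$, $\mathcal{F}_\bullet$, $\mathcal{F}^*$ operators.

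The main obstacle I anticipate is step (2)--(3): verifying that the particular $E$ from \cite{ber} really has progressive gaps \emph{after} the $\Z$-indexed union of shifted $\Delta$-sets, and that the shifts $r_k$ can be chosen so that (a) the chunk structure of the union is still monotone-gap in the required sense and (b) $E$ still contains an honest (shifted) $\Delta$-set so that $E\notin(\Delta^*)$-dual side fails. In \cite{ber} this is done for the ``$\mathcal{IP}^*$ vs $\Delta^*$'' dichotomy, so one must double-check that the same $E$ also defeats all $(\Delta-\Delta)$-sets and not merely all $IP$-sets — but this is immediate from step (1), since $(\Delta-\Delta)$-sets, like $IP$-sets, cannot have progressive gaps, and $E$ having progressive gaps is exactly the property that blocks containment of any set of that type. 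I would therefore present the proof as: ``$E$ be as in \cite[proof of Theorem 2.11]{ber}; by the argument of Lemma~\ref{pdel} (see \eqref{delta-delta}) no $(\Delta-\Delta)$-set has progressive gaps, hence $E$ contains no shifted $(\Delta-\Delta)$-set, so $E^c\in(\Delta-\Delta)^*_\bullet$; since $E$ contains a shifted $\Delta$-set, $E^c\notin\Delta^*_\bullet$,'' and leave the detailed combinatorics of $E$ to the cited reference.
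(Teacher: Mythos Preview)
Your proposal is correct and follows essentially the same route as the paper: take the set $E$ from \cite[proof of Theorem~2.11]{ber}, use that $(\Delta-\Delta)$-sets cannot have progressive gaps (Lemma~\ref{pdel}) to conclude $E^c\in(\Delta-\Delta)^*_\bullet$, and use that $E$ contains a shifted $\Delta$-set to conclude $E^c\notin\Delta^*_\bullet$. One small simplification: only the \emph{first} conclusion of Lemma~\ref{pdel} (that a $(\Delta-\Delta)$-set cannot have progressive gaps) is needed here; the second part about $\Delta(B)-\Delta(B)$ not being an $IP$-set plays no role in Lemma~\ref{our} and is used only later in Theorem~\ref{filter}(2).
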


\begin{thm}\label{filter}
\begin{enumerate}
\item
  There exists a $(\Delta-\Delta)^*$-transitive  which is not
$\Delta^*$-mixing.
\item
There exists an $(\mathcal{IP-IP})^*$-transitive which is
 not  $(\Delta-\Delta)^*$-transitive. 
\end{enumerate}
\end{thm}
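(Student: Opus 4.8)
The plan is to exhibit, in each part, a spacing shift $\Sigma_P^{\Z}$ with a suitable symmetric $P\subset\Z$, reading the positive ($\mathcal{F}$-transitive) assertion off Corollary~\ref{tran-sigma} and the negative one either off Theorem~\ref{Fmixing} (non–$\Delta^*$-mixing, for (1)) or off the identity $N([1],[1])=P$ of Definition~\ref{isp} together with the exhibition of a disjoint $(\Delta-\Delta)$-set (non–$(\Delta-\Delta)^*$-transitive, for (2)). The combinatorial inputs are Lemma~\ref{our} for part (1) and Lemma~\ref{pdel} for part (2).

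For (1) I would take $P$ to be the (symmetrized) Bergelson--Downarowicz set supplied by Lemma~\ref{our}, so that $P\in(\Delta-\Delta)^*_{\bullet}$ while $P\notin\Delta^*_{\bullet}$. The delicate point is to upgrade $P\in(\Delta-\Delta)^*_{\bullet}$ to $P\in\tau(\Delta-\Delta)^*$, which is what Corollary~\ref{tran-sigma} actually needs and which one cannot get from Theorem~\ref{thick-filter} since $(\Delta-\Delta)^*$ need not be a filter. Instead I would use that $E=\Z\setminus P$ is by construction a union over \emph{all} $k\in\Z$ of translates of $\Delta$-sets whose chunks recede to infinity, with translation parameters that may be chosen to grow arbitrarily fast; then for any finite $\{j_1,\dots,j_n\}$ the set $\bigcup_{i=1}^n(E-j_i)$ is again a countable union of translates of $\Delta$-sets with progressive gaps (near each former chunk the $n$ translates coalesce into a single bounded block, while consecutive blocks still recede to infinity), hence, by the mechanism behind Lemmas~\ref{pdel} and~\ref{our} — a set with progressive gaps cannot contain a $(\Delta-\Delta)$-set — it contains no $(\Delta-\Delta)$-set. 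Thus $\bigcap_{i=1}^n(P-j_i)\in(\Delta-\Delta)^*$ for every finite $\{j_i\}$, i.e. $P\in\tau(\Delta-\Delta)^*$, and $\Sigma_P^{\Z}$ is $(\Delta-\Delta)^*$-transitive. Since $\Delta^*$ is a filter, $\tau\Delta^*=\Delta^*_{\bullet}$, and as $P\notin\Delta^*_{\bullet}$, Theorem~\ref{Fmixing} shows $\Sigma_P^{\Z}$ is not $\Delta^*$-mixing.

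For (2) I would use the rapidly growing sequence $B$ of Lemma~\ref{pdel} and set $P=\Z\setminus(\Delta(B)-\Delta(B))$, which is symmetric because $\Delta(B)-\Delta(B)=-(\Delta(B)-\Delta(B))$. To obtain $(\mathcal{IP-IP})^*$-transitivity I would check $P\in\tau(\mathcal{IP-IP})^*$, i.e. that no finite union $\bigcup_i\big((\Delta(B)-\Delta(B))-k_i\big)$ contains a difference set $FS(A')-FS(A'')$ of two $IP$-sets: every such difference set contains an $IP$-set, and since $\mathcal{IP}$ is partition regular, such a containment would force some translate $(\Delta(B)-\Delta(B))-k_i$ to contain an $IP$-set, hence $\Delta(B)-\Delta(B)$ to contain a translate of an $IP$-set; excluding this is exactly the rigidity argument of Lemma~\ref{pdel} (the unique representation of elements of $\Delta(B)-\Delta(B)$ granted by $b_n>4\sum_{i<n}b_i$) run for shifted $IP$-sets, which is the content of \cite[Example C]{huang3}. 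So Corollary~\ref{tran-sigma} applies. It is not $(\Delta-\Delta)^*$-transitive because, by Definition~\ref{isp}, $N([1],[1])=P$ is disjoint from $\Delta(B)-\Delta(B)$, which is a $(\Delta-\Delta)$-set, whence $N([1],[1])\notin(\Delta-\Delta)^*$; if one wants a $(\Delta-\Delta)$-set avoiding $0$, one replaces $\Delta(B)-\Delta(B)$ here by $\Delta(B_{\mathrm{ev}})-\Delta(B_{\mathrm{od}})\subseteq\Delta(B)-\Delta(B)$, with $B_{\mathrm{ev}},B_{\mathrm{od}}$ the even- and odd-indexed subsequences of $B$, these two $\Delta$-sets being disjoint again by unique representation.

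The main obstacle in both parts is the verification of $\tau$-membership: that the thin, structured complements ($E$ in (1), $\Delta(B)-\Delta(B)$ in (2)) stay too thin to contain a $(\Delta-\Delta)$-set, respectively a difference set of $IP$-sets, even after passing to finite unions of translates — in part (1) this rests on the freedom in the translation parameters of the Bergelson--Downarowicz construction, and in part (2) on the rigidity imposed by the super-exponential growth of $B$. That the dual families in the second row of \eqref{diag} are not known to be filters is precisely what blocks the shortcut through Theorem~\ref{thick-filter}.
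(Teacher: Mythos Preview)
Your approach to part~(1) matches the paper's exactly: take $P$ the complement of the Bergelson--Downarowicz set $E$ from Lemma~\ref{our}, and show $P\in\tau(\Delta-\Delta)^*$ by arguing that finite unions of translates of $E$ still have progressive gaps and hence contain no $(\Delta-\Delta)$-set; the negative assertion comes from $P\notin\Delta^*_\bullet=\tau\Delta^*$ via Theorem~\ref{Fmixing}.

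For part~(2) you use the same example $P=\Z\setminus(\Delta(B)-\Delta(B))$ as the paper, but your route to the positive assertion differs. The paper does not verify $P\in\tau(\mathcal{IP}-\mathcal{IP})^*$ directly. Instead it observes that $P\in\mathcal{IP}^*_\bullet$ (the rigidity argument of Lemma~\ref{pdel} excludes shifted $IP$-sets from $\Delta(B)-\Delta(B)$), and since $\mathcal{IP}^*$ is a filter, Theorem~\ref{thick-filter} gives $\mathcal{IP}^*_\bullet=\tau\mathcal{IP}^*$; hence by Theorem~\ref{Fmixing} the spacing shift is $\mathcal{IP}^*$-\emph{mixing}, and then one simply quotes the known implication $\mathcal{IP}^*$-mixing $\Rightarrow$ mild mixing $=(\mathcal{IP}-\mathcal{IP})^*$-transitive. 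This sidesteps your two extra ingredients (that every $(IP-IP)$-set contains an $IP$-set, and the partition-regularity step), and it exploits precisely the filter shortcut through Theorem~\ref{thick-filter} that you correctly noted is unavailable for $(\Delta-\Delta)^*$ in part~(1). Your argument is correct as well, but the paper's buys a shorter proof and a stronger conclusion ($\mathcal{IP}^*$-mixing) at no extra cost; yours stays within the target family $(\mathcal{IP}-\mathcal{IP})^*$ but needs more combinatorics.
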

\begin{proof}
(1)
We show that $(\Delta-\Delta)^*_{\bullet}$ contains a thick family $\mathcal{F}$
different from $\Delta^*_{\bullet}$. Then by applying Corollary~\ref{tran-sigma},
$\Sigma^\mathbb{Z}_P$ will be the required space.

 Let $E$ be the set given in Lemma~\ref{our} and set
$P^+:=E^c\subset \mathbb N,\,P:=P^+\cup
P^-$.
 Also let  $$\mathcal{F}:=\{(P+ k_1)\cap(P+
k_2)\cap\cdots\cap(P+ k_n):\, \{k_1,\,k_2,\,\ldots,\,k_n\}\subset\mathbb{Z}\}.$$ 
One needs  to show that any element of
$\mathcal{F}$ is a $(\Delta-\Delta)^*_{\bullet}$-set and to have this
we prove that the complement of an element of $\mathcal{F}$
does not contain a $(\Delta-\Delta)$-set. Since { $(P+
k_1)  \cap \cdots\cap(P+ k_n)=\left((P^++ k_1) \cap  \cdots\cap(P^++ k_n)\right) \cup \left((P^-+ k_1) \cap \cdots \cap (P^-+ k_n)\right)$}, $\mathcal{F}$ will be a thick family in $(\Delta-\Delta)^*$
  if
we show that $(P^++ k_1)\cap\cdots\cap(P^++
k_n)$ is a $(\Delta-\Delta)^*$-set or equivalently
$(E+k_1)\cup\cdots\cup(E+k_n)$
does not contain any $(\Delta-\Delta)$-set.

We show this fact for  $E\cup (E+k)$, general case follows similarly. Assume the contrary and let $L$ be a $(\Delta-\Delta)$-sets  such that $L\subset E\cup (E+k)$.
For any $i\in\N$, choose  $\{l_1^i,\,l_2^i,\,l_3^i,\,l_4^i\}_{i\in\N}$ satisfying \eqref{delta-delta}, i.e.  $l_2^i-l_1^i=l_4^i-l_3^i=d$ and observe that for all $i$, $l_3^i-l_1^i=l_3^1-l_1^1$. We may assume that $l_3^1-l_1^1>k$.
Clearly $l_1^i,\,l_2^i,\,l_3^i,\,l_4^i$ cannot be in $E$ or $E+k$.
Therefore, suppose for some $i$, $l_1^i\in (E+k)$ and  $l^i_j\in E,\,2\leq j\leq 4$ (similar argument applies for other cases). Then $l_1^i-k$ is in  the same chunk of $E$  containing  $l_j^i$'s. But one has $l_2^i-(l_1^i-k)=k+d<l^i_4-l^i_2$. This is absurd since $E$ has  progressive gaps. 

(2)
Choose $B$ satisfying Lemma~\ref{pdel} and set $P^+:=(\Delta(B)-\Delta(B))^c\subset \N$; then $P=P^+\cup P^-$ is $\mathcal{IP}^*_{\bullet}$ which is not  $(\Delta-\Delta)^*$. 
So the associated $\Sigma_P^{\Z}$ is $\mathcal{IP}^*$-mixing but  not $(\Delta-\Delta)^*$-transitive. 
Now since any  $\mathcal{IP}^*$-mixing is  $(\mathcal{IP-IP})^*$-transitive, $\Sigma_P^{\Z}$ must be $(\mathcal{IP-IP})^*$-transitive. 
\end{proof}


For any $\epsilon>0$ there exists $A\subset \mathbb{N}$ such that $d(A)>1-\epsilon$ 
and  there is  not any sequence $\{x_n\}_{n\in\mathbb{N}}$ in
 $\mathbb{N}$ and $t\in \mathbb{Z}$ such that $t+ FS(\{x_n\}_{n\in\mathbb{N}})\subset A$ \cite[Theorem 2.20]{ber2}.
 This means that there is a set with positive density which  is not a $D$-set  and so $\mathcal{D}$ and $\mathcal{F}_{pubd}$ are 
actually different families. However, in the next result, we will show that $\Delta(\mathcal{D})=\Delta(\mathcal{F}_{pubd})$.
Before that we need some more terminologies.

Let $\mathcal{F}$ be a family. The \emph{block family} of $\mathcal{F}$, denoted by $b\mathcal{F}$, is the family
consisting of sets $F\subset \mathbb{Z}$ for which there exists some $F'\in \mathcal{F}$ such that for every finite subset $W$ of $F'$,  $m +W \subset F$ for some $m\in \mathbb{Z}$ \cite{li}.
We have $b\mathcal{S}=\mathcal{PS}$, $b\mathcal{F}_{pubd}= \mathcal{F}_{pubd}$ and $b\mathcal{I}^*=\mathcal{T}$. 
Also, by \cite[Lemma 2.1]{li}, $\Delta(\mathcal{F})=\Delta(b\mathcal{F})$.

For $A\subset \Z$, let
$1_A=(x_n)_{n\in\mathbb{Z}}\in \{0,\,1\}^{\mathbb{Z}}$  
with $x_n=1$ whenever $n\in A$.
If $d^*(A)>0$, then  there exists a transitive system  $(Y,\,\sigma),\,Y\subset
 \bar{O}(1_A) $ with an invariant Borel
probability measure $\mu$ with full support  (see comments prior to \cite[Theorem 2.4]{huang3}).  Any TDS with such an invariant measure is called an \emph{$E$-system}.

\begin{thm}\label{strong-scatt}
$b\mathcal{D}=\mathcal{F}_{pubd}$. In particular, $(\mathcal{D}-\mathcal{D})=( \mathcal{F}_{pubd}-\mathcal{F}_{pubd})$ and any set in $(\mathcal{F}_{pubd}-\mathcal{F}_{pubd})$ is an $(IP-IP)$-set.
\end{thm}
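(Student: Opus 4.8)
The plan is to establish $b\mathcal{D}=\mathcal{F}_{pubd}$ first and then read off the two further assertions as corollaries. One inclusion is soft: since every $D$-set lies in an idempotent all of whose members have positive upper Banach density, we have $\mathcal{D}\subseteq\mathcal{F}_{pubd}$; the operation $\mathcal{G}\mapsto b\mathcal{G}$ is monotone, so $b\mathcal{D}\subseteq b\mathcal{F}_{pubd}=\mathcal{F}_{pubd}$, the last equality having been recorded above. Thus the real work is the reverse inclusion $\mathcal{F}_{pubd}\subseteq b\mathcal{D}$.

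For that, fix $A\subset\Z$ with $d^*(A)>0$ and pass to symbolic dynamics. As recalled just before the statement, $1_A$ gives rise to an $E$-system: there are $(Y,\sigma)$ with $Y\subseteq\bar{O}(1_A)$ and an invariant measure $\mu$ of full support, and the construction furthermore yields $\mu([1]_0)>0$, where $[1]_0=\{z:z_0=1\}$. The crucial step is to invoke the dynamical characterization of $D$-sets (the measure-theoretic analogue of Furstenberg's characterization of central sets; see \cite{ber,huang3}): because $[1]_0\cap Y$ is a nonempty relatively clopen set of positive $\mu$-measure in a system carrying a full-support invariant measure, there exist a point $x\in Y$ and a clopen set $U$ with $x\in U\subseteq[1]_0$ such that $D:=N(x,U):=\{n\in\Z:\sigma^n x\in U\}$ is a $D$-set. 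It then remains only to verify that this $D$ witnesses $A\in b\mathcal{D}$: if $W=\{n_1<\cdots<n_k\}\subseteq D$ is finite, then $\sigma^{n_i}x\in U\subseteq[1]_0$ gives $x_{n_i}=1$ for each $i$; the finite word $x_{n_1}x_{n_1+1}\cdots x_{n_k}$ is a factor of $x$, hence, since $x\in\bar{O}(1_A)$, it occurs in $1_A$, say starting at position $m+n_1$, so that $(1_A)_{m+n_i}=x_{n_i}=1$, i.e. $m+W\subseteq A$. Hence $A\in b\mathcal{D}$, which finishes the equality.

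The two ``in particular'' claims then follow with little extra effort. For $(\mathcal{D}-\mathcal{D})=(\mathcal{F}_{pubd}-\mathcal{F}_{pubd})$, the inclusion $\subseteq$ is immediate from $\mathcal{D}\subseteq\mathcal{F}_{pubd}$, while for $\supseteq$ one feeds $\mathcal{F}_{pubd}=b\mathcal{D}$ into the identity $\Delta(\mathcal{F})=\Delta(b\mathcal{F})$ of \cite[Lemma 2.1]{li}; concretely, if $A_i\in\mathcal{F}_{pubd}=b\mathcal{D}$ with witnessing $D$-sets $D_i$ and finite $W_i\subseteq D_i$ chosen with $m_i+W_i\subseteq A_i$, then $(m_1-m_2)+(W_1-W_2)\subseteq A_1-A_2$, so the block structure of the witnessing $D$-sets is inherited by their difference sets. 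Finally, every $D$-set is an $IP$-set, so $\mathcal{D}\subseteq\mathcal{IP}$ gives $(\mathcal{F}_{pubd}-\mathcal{F}_{pubd})=(\mathcal{D}-\mathcal{D})\subseteq(\mathcal{IP}-\mathcal{IP})$; that is, every member of $(\mathcal{F}_{pubd}-\mathcal{F}_{pubd})$ is an $(IP-IP)$-set.

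I expect the only genuinely nontrivial point to be the one flagged above: producing an honest $D$-set as a return-time set $N(x,U)$ with $U\subseteq[1]_0$. This is exactly where one must exploit the invariant measure, not merely transitivity of the $E$-system — it is the content of the dynamical characterization of the family $\mathcal{D}$ — whereas the remaining ingredients (monotonicity of $b$, the one-line factor-and-translation argument, and the passage from $b\mathcal{D}=\mathcal{F}_{pubd}$ to the difference-set statements via \cite[Lemma 2.1]{li}) are routine.
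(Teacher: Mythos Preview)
Your proposal is correct and follows essentially the same route as the paper: reduce to $\mathcal{F}_{pubd}\subseteq b\mathcal{D}$, pass to the $E$-system inside $\bar O(1_A)$, invoke the Bergelson--Downarowicz dynamical description of $D$-sets to obtain a $D$-set of the form $N(y,[1])$ for a suitable point $y\in\bar O(1_A)$, and then run the finite-word translation argument to conclude $A\in b\mathcal{D}$. The paper is a bit more explicit about the citation (it takes $y$ to be a transitive point of the $E$-system, observes via \cite[Theorem 2.6]{ber} that $y$ is essentially recurrent, and then applies \cite[Theorem 2.8]{ber}), and it leaves the ``in particular'' clauses implicit, but there is no substantive difference.
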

\begin{proof}
The proof will be similar to proof in \cite[Example 2.4]{huang-li}.
We have  $\mathcal{D}\subset \mathcal{F}_{pubd}$ and hence
 $b\mathcal{D}\subset b\mathcal{F}_{pubd}=\mathcal{F}_{pubd}$.
 Let $A\in \mathcal{F}_{pubd}$ and set 
   $X=\bar{O}(1_A)$. 
Then $A=N(1_A,\,[1])$. Let  $(Y,\,\sigma)$ be the $E$-system where $Y\subset X$, $Y\neq \{0^{\infty}\}$ and let  $y$ be a
 transitive point in $Y$. 
The fact that $Y$ is an $E$-system, $\bar{O}(y)=Y$ is measure saturated and so $y$  is essentially  recurrent \cite[Theorem 2.6]{ber}.
 Apply \cite[Theorem 2.8]{ber} for $x=y$ and $U_{(y,\,y)}=[1]\times [1]$ to see that 
$$F=\{n\in\mathbb{Z}: (\sigma^ny,\,\sigma^ny)\in [1]\times [1]\}=N(y,\,[1])$$
is a $D$-set. Since $y\in \bar{O}(1_A)$, for any finite subset $W$ of $F$ one has $m+W\subset A$ for some $m\in\mathbb{Z}$. This means that $A\in b\mathcal{D}$ and $b\mathcal{D}=\mathcal{F}_{pubd}$.
\end{proof}


\subsection{Scattering systems}
Theorems~\ref{F^*-mixing} and \ref{filter} show different transitive systems arising from \eqref{diag}. We continue for the remaining cases
in \eqref{diag} using the results and routines from scattering.

Let ${\mathcal U}=\{U_1,\,U_2,\ldots,U_n\}$ be  a finite open cover of
$X$. We call this cover \emph{non-trivial} if $U_i$ is not dense
in $X$ for $1\leq i\leq n$. Let $N(\mathcal U)$ be the least cardinality of a sub-cover
of $\mathcal U$. For this $\mathcal{U}$  and an infinite sequence
$A=\{a_1,\,a_2,\ldots\}\subset {\mathbb N}$ the \emph{complexity
function} of $\mathcal U$ along a sequence $A$
 is defined to be
$${\mathcal C}_A(\mathcal U)=\lim_{n\rightarrow\infty}N(\bigvee_{i=1}^n T^{-a_i}{\mathcal U}).$$
This complexity function is a tool which is used to classify some transitive systems
\cite{huang4,huang3}.


A TDS $(X,\,T)$ is called \emph{full scattering}, \emph{strong scattering}, \emph{scattering} if for any finite non-trivial 
cover $\mathcal{U},\,{\mathcal C}_A(\mathcal U)=\infty$ for $A\in \mathcal{I},\,A\in\mathcal{F}_{pubd}$ and $A\in \mathcal{PS}$ respectively \cite{huang3}. 
There are other characterization for scatterings.
{
 For instance, $(X,\,T)$ is strong scattering if and only if $N(U,\,V)\in 
(\mathcal{F}_{pubd}-\mathcal{F}_{pubd})^*$ and it is scattering if and only if $N(U,\,V)\in (\mathcal{S}-\mathcal{S})^*$ 
where $\mathcal{S}$ is the family of syndetic sets \cite{huang4} and in \cite{huang5} it is shown that $(X,\,T)$ is \emph{weak scattering} if and only if it is $\mbox{Bohr}^*$-transitive.
}

{
It is known that  \cite{huang3}
\begin{eqnarray*}
\text{mixing}&\subsetneq& \!\!\!\text{full scattering }\subseteq \Delta^*-\text{mixing }\subsetneq\text{mild mixing } \subsetneq\text{ weak mixing }\\
&\subsetneq&\!\!\!\text{strong scattering }\subseteq\text{ scattering }\subseteq\text{ weak scattering}\\
&\subsetneq&\!\!\!\text{totally transitive.}
\end{eqnarray*}
There are two sorts of inclutions appearing above, $\subsetneq$ and $\subseteq$ . For $\subseteq$, we actually do not know whether it can be replaced with equality or a proper inclusion.
In fact, the equivalency of full scattering and $\Delta^*$-mixing is an open  problem first raised in \cite{huang3} and the same problem has been questioned  between
 weak scattering and strong scattering  in \cite{dong, huang4}.
}

 Now we   give a sufficient condition for a certain situation  that strong scattering and weak scattering are equivalent.
{ Another case will be dealt in Theorem \ref{C-C-1}.
} 

\begin{thm}\label{strong-scat}
Let $(X,\,T)$ be a weak scattering TDS. If for any opene sets $U,\,V\subset X$
 and Bohr set $B$,
$N(U,\,V)\cap B$ has positive upper Banach density, then
$(X,\,T)$ is strongly scattering.
\end{thm}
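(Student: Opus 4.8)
The plan is to reduce everything to the combinatorial characterizations of the two scattering notions. By \cite{huang4}, $(X,T)$ is strongly scattering exactly when $N(U,V)\in(\mathcal{F}_{pubd}-\mathcal{F}_{pubd})^{*}$ for all opene $U,V$, and by \cite{huang5} the weak scattering hypothesis means $N(U,V)\in\mathrm{Bohr}^{*}$ for all opene $U,V$. So I fix opene $U,V$ and a set $L\in\mathcal{F}_{pubd}-\mathcal{F}_{pubd}$, write $L\supseteq A-B$ with $d^{*}(A)>0$ and $d^{*}(B)>0$, and must produce $a\in A$, $b\in B$ with $a-b\in N(U,V)$. Note first that this is trivial if $A=B$, since then $A-A$ contains a $\mathrm{Bohr}_{0}$ set by Bogoliubov's theorem and $N(U,V)$ meets every Bohr set by weak scattering; the whole difficulty is that $A-B$ for $A\neq B$ need not be syndetic and so need not contain any Bohr set, which is precisely why weak scattering alone is insufficient and the density hypothesis is needed.

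The tools I would assemble are three. First, the Bogoliubov/correlation fact that for a set $C$ of positive upper Banach density the set $\{n:d^{*}(C\cap(C-n))>0\}$ contains a $\mathrm{Bohr}_{0}$ neighbourhood; applied to $A$ and to $B$ and intersected, this gives a $\mathrm{Bohr}_{0}$ set $B_{0}$ with $d^{*}(A\cap(A-n))>0$ and $d^{*}(B\cap(B-n))>0$ for every $n\in B_{0}$, and, sharpening the estimate in a measure-preserving model, a scale-dependent refinement: for each $k$ there is a $\mathrm{Bohr}_{0}$ set $B_{0}(k)\subseteq B_{0}$ so small that $A\cap\bigcap_{i\le k}(A-n_{i})$ and $B\cap\bigcap_{i\le k}(B-n_{i})$ still have positive upper Banach density whenever $n_{1},\dots,n_{k}\in B_{0}(k)$. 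Second, the $E$-system machinery already exploited in Theorem~\ref{strong-scatt}: every member of $\mathcal{F}_{pubd}$ carries translated finite chunks of a $D$-set arising from an essentially recurrent transitive point, which I may invoke if I need to replace $A$ or $B$ by a $D$-set. Third, and crucially, the hypothesis itself: $N(U,V)$, and every integer translate of it, meets every Bohr set in a set of positive upper Banach density, hence in particular in an infinite set.

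The core step I propose is a bootstrap. Using the hypothesis, pick $n_{1},\dots,n_{k}\in N(U,V)\cap B_{0}(k)$, pass to $A''=A\cap\bigcap_{i}(A-n_{i})$ and $B''=B\cap\bigcap_{i}(B-n_{i})$, both of positive density; then $A'',A''+n_{i}\subseteq A$ and $B'',B''+n_{i}\subseteq B$, so for any $c\in A''-B''$ one gets $A-B\supseteq c+\bigl(\{0,n_{1},\dots,n_{k}\}-\{0,n_{1},\dots,n_{k}\}\bigr)$. If the $n_{i}$ are chosen inside $N(U,V)\cap B_{0}(k)$ so that the difference set on the right fills out a large finite window of a $\mathrm{Bohr}_{0}$ set (possible because $N(U,V)\cap B_{0}(k)$ has positive density, hence its own difference set contains a $\mathrm{Bohr}_{0}$ set whose windows it covers with boundedly many pairs), and since $N(U,V)-c$ meets that $\mathrm{Bohr}_{0}$ set in an infinite set, some $n_{i}-n_{j}$ lands in $N(U,V)-c$, giving the desired element of $N(U,V)\cap(A-B)$. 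The hard part — and the place where the argument must be driven carefully rather than routinely — is the quantitative coordination of the three parameters: the size of the $\mathrm{Bohr}_{0}$ window that the differences $n_{i}-n_{j}$ must cover (which has to be large enough to catch $N(U,V)-c$, whose density may be supported far from the origin), the number $k$ of simultaneous recurrence times imposed on $A$ and $B$ (each extra one forcing a smaller $\mathrm{Bohr}_{0}$ set $B_{0}(k)$), and the resulting element $c\in A''-B''$, which is not under direct control. Unwinding this loop — showing that shrinking $B_{0}(k)$ makes the admissible window grow strictly faster than $k$, so that the process terminates — is the crux, and it is exactly there that the positive-density strength of the hypothesis, as opposed to mere $\mathrm{Bohr}^{*}$-transitivity, is used in an essential way.
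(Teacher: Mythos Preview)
Your plan rests on two misreadings. First, in this paper (and in the Huang--Ye characterization it cites) the family $\mathcal{F}_{pubd}-\mathcal{F}_{pubd}$ is the family generated by difference sets $A-A$ of a \emph{single} set $A$ with $d^{*}(A)>0$; see the remark $\Delta(\mathcal F)=\Delta(b\mathcal F)$ before Theorem~\ref{strong-scatt} and the way Lemma~\ref{scat} deduces $(\mathcal S-\mathcal S)^{*}=(\mathcal{PS}-\mathcal{PS})^{*}$ from it. So the ``whole difficulty'' you identify, the case $A\neq B$, is not part of the problem, and the entire bootstrap machinery you set up to handle it is unnecessary. Second, your claim that the case $A=B$ is ``trivial'' because ``$A-A$ contains a $\mathrm{Bohr}_{0}$ set by Bogoliubov's theorem'' is false as stated: Bogoliubov--F{\o}lner gives a $\mathrm{Bohr}_{0}$ set inside $A-A+A-A$, and whether $A-A$ itself must contain a $\mathrm{Bohr}_{0}$ set is a well-known open question. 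In particular, weak scattering alone (mere $\mathrm{Bohr}^{*}$-transitivity) is \emph{not} known to suffice even when $A=B$; the density hypothesis is needed precisely there, not only in the nonexistent $A\neq B$ case.

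The paper's argument replaces your bootstrap by a single structural input: by \cite[Corollary~5.3]{sumset} there exist a Bohr set $B$ and a set $N$ with $d^{*}(N)=0$ such that $B\subset (A-A)\cup N$. The hypothesis gives $d^{*}\bigl(N(U,V)\cap B\bigr)>0$, so $N(U,V)\cap B\not\subset N$, and any point of $(N(U,V)\cap B)\setminus N$ lies in $N(U,V)\cap(A-A)$. That is the whole proof. Your proposal never invokes this ``Bohr up to density zero'' result, and without it the quantitative loop you describe (coordinating $k$, the shrinking $B_{0}(k)$, and the uncontrolled $c\in A''-B''$) has no mechanism that forces termination; as you yourself note, that step is the crux and it is left as a hope rather than an argument.
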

\begin{proof}
By the definition of strong scattering, it is sufficient to show
that $N(U,\,V)\cap (A-A)\neq\emptyset$ where $d^*(A)>0$.  For any such 
set $A$, there exists a Bohr set
$B$ and a subset of integers $N$ with $d^*(N)=0$ such that
$B\subset (A-A)\cup N$ \cite[Corollary 5.3]{sumset}. Set
$C:=N(U,\,V)\cap B$ and note that by the assumption $d^*(C)>0$ and so
$C\not\subset N$. This means $(A-A)\cap C\neq\emptyset$ and as a result,
$N(U,\,V)\cap (A-A)\neq\emptyset$.
\end{proof}
 By \cite[  Proof of Theorem 3.1]{huang4},  there is a strong scattering $(X,\,T)$ such that   $N(U,\,V)$ has
zero upper Banach density for some opene sets $U,\, V$. So the necessity does not hold in the above theorem.

Huang and Ye's example in \cite[  Proof of Theorem 3.1]{huang4}
implies that $(\mathcal{IP-IP})^*$-transitives and $(\mathcal{D-D})^*=(\mathcal{F}_{pubd}-\mathcal{F}_{pubd})^*$-transitives are different families. Since when $d^*(N(U,\,V))=0$, then $N(U,\,V)$ is not thick and therefore, $(X,\,T)$ cannot be  mild mixing.

\begin{lem}\label{scat}
Let $(X,\,T)$ be a TDS. Then the following are equivalent.
\begin{enumerate}
  \item $(X,\,T)$ is scattering, that is, $(\mathcal{S-S})^*$-transitive.
  \item $(X,\,T)$ is $(\mathcal{PS}-\mathcal{PS})^*$-transitive.
  \item $(X,\,T)$ is $(\mathcal{C}-\mathcal{C})^*$-transitive.
\end{enumerate}
\end{lem}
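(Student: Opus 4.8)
The plan is to prove the chain of equivalences $(1)\Leftrightarrow(2)\Leftrightarrow(3)$ by comparing the three difference families $(\mathcal{S}-\mathcal{S})^*$, $(\mathcal{PS}-\mathcal{PS})^*$ and $(\mathcal{C}-\mathcal{C})^*$ directly, and showing they coincide. Since $\mathcal{S}\subset \mathcal{PS}$ and $\mathcal{S}\subset \mathcal{C}$ (a syndetic set is piecewise syndetic, and every syndetic set is central — being a member of a minimal idempotent), we get $\mathcal{S}-\mathcal{S}\subset \mathcal{PS}-\mathcal{PS}$ and $\mathcal{S}-\mathcal{S}\subset \mathcal{C}-\mathcal{C}$, hence $(\mathcal{PS}-\mathcal{PS})^*\subset (\mathcal{S}-\mathcal{S})^*$ and $(\mathcal{C}-\mathcal{C})^*\subset (\mathcal{S}-\mathcal{S})^*$. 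So one inclusion in each equivalence is automatic, and the whole lemma reduces to the reverse inclusion $(\mathcal{S}-\mathcal{S})^*\subset (\mathcal{PS}-\mathcal{PS})^*$ and $(\mathcal{S}-\mathcal{S})^*\subset (\mathcal{C}-\mathcal{C})^*$; equivalently, to showing that $\mathcal{PS}-\mathcal{PS}$ and $\mathcal{C}-\mathcal{C}$ are both contained in $\mathcal{S}-\mathcal{S}$, i.e.\ every difference set of two piecewise syndetic (resp.\ central) sets contains a difference set of two syndetic sets.

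The key step is therefore: \textbf{if $A,B\in \mathcal{PS}$, then $A-B$ contains $S_1-S_2$ for some $S_1,S_2\in\mathcal{S}$.} Write $A=T_A\cap S_A$ and $B=T_B\cap S_B$ with $T_A,T_B$ thick and $S_A,S_B$ syndetic. The difference $A-B$ contains $(T_A\cap S_A)-(T_B\cap S_B)$. I would exploit that a piecewise syndetic set $A$ has the form: there is a syndetic set $S'$ and, for every finite $F\subset S'$, a translate $m+F\subset A$ (this is the $b\mathcal{S}=\mathcal{PS}$ description recalled in the paper). Using this, for any finite $F\subset S_A$ and $G\subset S_B$ there are translates placing $m+F\subset A$ and $m'+G\subset B$, so $(m-m')+(F-G)\subset A-B$. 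Running this over an exhausting sequence of finite subsets and passing to the structure of $\mathcal{S}-\mathcal{S}$ (note $\mathcal{S}-\mathcal{S}$ is a large shift-invariant family; in fact one checks $S-S$ is syndetic with $0$ in it, and membership in $(\mathcal{S}-\mathcal{S})^*$ only requires meeting all such difference sets), I would conclude $A-B\supseteq S_1-S_2$ for suitable syndetic $S_1,S_2$. For the central case the argument is the same since $\mathcal{C}\subset\mathcal{PS}$ (a central set, being a member of a minimal idempotent, is piecewise syndetic), so $\mathcal{C}-\mathcal{C}\subset \mathcal{PS}-\mathcal{PS}\subset \mathcal{S}-\mathcal{S}$, and combined with $\mathcal{S}-\mathcal{S}\subset \mathcal{C}-\mathcal{C}$ this closes the loop.

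The main obstacle I expect is making the reduction ``$A-B$ contains a difference of syndetic sets'' precise: one must produce \emph{genuinely syndetic} sets $S_1,S_2$ (not merely an increasing family of finite pieces) whose difference sits inside $A-B$, which requires controlling the translates $m,m'$ uniformly — the cleanest route is probably to invoke the block-family formalism ($b\mathcal{S}=\mathcal{PS}$) together with \cite[Lemma 2.1]{li}-type facts that $\Delta$ or difference operations interact well with $b$, or else to quote directly the known identity $(\mathcal{S-S})^*=(\mathcal{PS-PS})^*=(\mathcal{C-C})^*$ that the paper already announced would be ``pointed out later.'' If that identity is taken as the content being proved here, then the honest work is exactly the containment $\mathcal{PS}-\mathcal{PS}\subset \mathcal{S}-\mathcal{S}$ via the translate argument above, and the rest is the easy monotonicity of duals under inclusion of families.
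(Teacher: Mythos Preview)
Your reduction to showing $\mathcal{PS}-\mathcal{PS}\subset\mathcal{S}-\mathcal{S}$ is on the right track and is exactly what \cite[Lemma~2.1]{li} gives via $b\mathcal{S}=\mathcal{PS}$; the paper simply cites that lemma for $(1)\Leftrightarrow(2)$, so your ``main work'' paragraph is pointing at the correct mechanism.

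However, there is a genuine error in your treatment of $(3)$. The claimed inclusion $\mathcal{S}\subset\mathcal{C}$ is false: the odd integers $2\mathbb{Z}+1$ are syndetic but cannot lie in any idempotent ultrafilter (if $a,b$ are odd then $a+b$ is even, so $2\mathbb{Z}+1$ contains no $IP$-set), hence they are not central. Thus you do not get $\mathcal{S}-\mathcal{S}\subset\mathcal{C}-\mathcal{C}$ for free, and the implication $(3)\Rightarrow(1)$ in your scheme is unjustified. Note that $\mathcal{C}\subset\mathcal{PS}$ alone only gives $\mathcal{C}-\mathcal{C}\subset\mathcal{PS}-\mathcal{PS}$, which after your main step yields $(1)\Rightarrow(3)$ but not the converse.

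The paper closes this gap differently: it uses the identity $\mathcal{PS}=\mathcal{C}_+$ from \cite{ber}. Since $(\mathcal{F}-\mathcal{F})_+=\mathcal{F}_+-\mathcal{F}_+$ (a one-line check: $(A+k)-(B+l)=(A-B)+(k-l)$), Theorem~\ref{mixing}(1) gives that $(\mathcal{C}-\mathcal{C})^*$-transitivity and $(\mathcal{C}_+-\mathcal{C}_+)^*$-transitivity coincide, and the latter is $(\mathcal{PS}-\mathcal{PS})^*$-transitivity. This yields $(2)\Leftrightarrow(3)$ directly, without any comparison between $\mathcal{S}$ and $\mathcal{C}$. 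You should replace your appeal to ``$\mathcal{S}\subset\mathcal{C}$'' by this argument.
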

\begin{proof}
 From \cite[Lemma 2.1]{li}, $(\mathcal{S-S})^*=(\mathcal{PS-PS})^*$.
 So (1)
$\Leftrightarrow$ (2).
 Also, a set is $PS$ if and only if it is
$C_+$ \cite{ber}. Hence $(\mathcal{C-C})=(\mathcal{PS-PS})$; or, $(\mathcal{C-C})^*=(\mathcal{PS-PS})^*$. Thus (2) $\Leftrightarrow$ (3).
\end{proof}

 Next theorem shows that $\mathcal{D}^*$-mixing, $\mathcal{C}^*$-mixing and $(\mathcal{C-C})^*$-transitive are different. 
\begin{thm}\label{C-C} 
 $\mathcal{C}^*$-mixing is a proper subfamily of  $(\mathcal{D}-\mathcal{D})^*$-transitive.
\end{thm}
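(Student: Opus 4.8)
I would prove this by exhibiting a spacing shift which is $(\mathcal{D}-\mathcal{D})^*$-transitive but not $\mathcal{C}^*$-mixing, together with the (already essentially established) inclusion $\mathcal{C}^*\text{-mixing}\subset(\mathcal{D}-\mathcal{D})^*\text{-transitive}$. The inclusion is the easy direction: by Theorem~\ref{F^*-mixing} (or directly, since $\mathcal{C}^*\subset\mathcal{D}^*$ and $\mathcal{D}^*\subset(\mathcal{D}-\mathcal{D})^*$ via $\Delta(\mathcal{D})\subset(\mathcal{D}-\mathcal{D})$ combined with the characterization $N(U,V)\in\mathcal F$ iff $N(U,V)\cap N(U,U)\in\mathcal F$), every $\mathcal{C}^*$-mixing system is in particular $(\mathcal{D}-\mathcal{D})^*$-transitive. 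So the whole content is the strictness.

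**Constructing the separating example.** Recall from \eqref{TS} that $\mathcal{C}^*_{\bullet}=\mathcal{TS}$, the family of thickly syndetic sets, and from Theorem~\ref{Fmixing} that $\Sigma_P^{\mathbb Z}$ is $\mathcal{C}^*$-mixing iff $P\in\tau\mathcal{C}^*=\mathcal{C}^*_{\bullet}=\mathcal{TS}$. Likewise, by Corollary~\ref{tran-sigma}, $\Sigma_P^{\mathbb Z}$ is $(\mathcal{D}-\mathcal{D})^*$-transitive as soon as $P\in\tau\big((\mathcal{D}-\mathcal{D})^*\big)=(\mathcal{D}-\mathcal{D})^*_{\bullet}$ (the family $(\mathcal{D}-\mathcal{D})^*$ is a filter since, by Theorem~\ref{strong-scatt}, $(\mathcal{D}-\mathcal{D})=(\mathcal F_{pubd}-\mathcal F_{pubd})$ and this is a difference set of a partition regular family, hence its dual is a filter; so $\tau$ and $(\cdot)_\bullet$ coincide on it by Theorem~\ref{thick-filter}). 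Thus it suffices to produce a symmetric set $P=P^+\cup P^-$ with $P\in(\mathcal{D}-\mathcal{D})^*_{\bullet}$ but $P\notin\mathcal{C}^*_{\bullet}=\mathcal{TS}$. Equivalently, taking complements, I want $P^{+}$ whose complement $E=(P^+)^c\subset\mathbb N$ (i) contains no shifted copy of any $(\mathcal{D}-\mathcal{D})$-set (so every finite shift-intersection of $P$ is $(\mathcal{D}-\mathcal{D})^*$), yet (ii) $E$ is piecewise syndetic (so $P$ is not thickly syndetic, hence not $\mathcal{C}^*_\bullet$).

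**Where the work is.** The natural candidate is a set $E$ with progressive gaps, exactly as in the Bergelson--Downarowicz construction used for Lemma~\ref{our} and Lemma~\ref{pdel}: a set with progressive gaps contains no $(\Delta-\Delta)$-set and no shifted $IP$-set. But here I need the stronger conclusion that $E$ (and its finite shifts, and unions of finite shifts) contains no $(\mathcal D-\mathcal D)$-set. Since by Theorem~\ref{strong-scatt} any set in $(\mathcal F_{pubd}-\mathcal F_{pubd})=(\mathcal D-\mathcal D)$ is an $(IP-IP)$-set, and in particular contains genuine $IP$-structure on both sides, the progressive-gap obstruction should still apply: an $(\mathcal D-\mathcal D)$-set $L$ contains $\Delta(S)$-type configurations forcing a fixed gap $d$ to recur at bounded distance from chunk-ends, contradicting progressive gaps — this is the same mechanism as in \eqref{delta-delta}. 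The genuinely new requirement is (ii): I must choose $E$ with progressive gaps that is simultaneously \emph{piecewise syndetic}. This is compatible — one arranges the chunks of $E$ to sit inside arbitrarily long intervals (thickness), each such block intersected with a fixed syndetic skeleton — but it is a delicate balancing of the chunk lengths against the inter-chunk gaps, and this is the step I expect to be the main obstacle. Once $E$ is built, one sets $P^+=E^c$, $P=P^+\cup P^-$; the union-of-shifts argument (handled exactly as in the proof of Theorem~\ref{filter}(1), reducing $(P+k_1)\cap\cdots\cap(P+k_n)$ to the $P^+$ and $P^-$ pieces and noting a finite union of progressive-gap sets still defeats $(\mathcal D-\mathcal D)$) shows $P\in(\mathcal D-\mathcal D)^*_\bullet$, while $E$ piecewise syndetic gives $P\notin\mathcal{TS}=\mathcal{C}^*_\bullet$, so $\Sigma_P^{\mathbb Z}$ is $(\mathcal D-\mathcal D)^*$-transitive but not $\mathcal{C}^*$-mixing.
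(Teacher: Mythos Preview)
Your plan has a genuine obstruction precisely at the step you flag as ``the main obstacle'': a set $E$ with progressive gaps can \emph{never} be piecewise syndetic. Inside each chunk the condition $a_i-a_{i-1}>a_{n_{k+1}}-a_i$ forces the distances $d_j:=a_{n_{k+1}}-a_{n_{k+1}-j}$ to satisfy $d_j>2d_{j-1}$, so a chunk of diameter $L$ contains at most $O(\log L)$ elements; together with the inter-chunk gaps tending to infinity this gives $d^*(E)=0$. But every piecewise syndetic set has positive upper Banach density (intersect the syndetic part with the arbitrarily long intervals supplied by the thick part). Equivalently, since $\mathcal{PS}=\mathcal{C}_+$ and central sets contain $IP$-sets, any piecewise syndetic $E$ contains a \emph{shifted} $IP$-set --- exactly what the Bergelson--Downarowicz progressive-gap argument (the same one behind Lemma~\ref{our}) excludes. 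Your sketch ``arrange the chunks to sit inside long intervals intersected with a fixed syndetic skeleton'' would give bounded gaps inside each block, directly contradicting the progressive-gap condition. So the separating spacing shift you propose cannot exist with this $E$.

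The paper's proof is entirely different and much shorter: it does not use spacing shifts. Via Theorem~\ref{strong-scatt} one has $(\mathcal{D}-\mathcal{D})=(\mathcal{F}_{pubd}-\mathcal{F}_{pubd})$, so $(\mathcal{D}-\mathcal{D})^*$-transitivity is exactly strong scattering; the paper then invokes the Huang--Ye example \cite{huang4} of a strong scattering TDS that is \emph{not weakly mixing}, hence a fortiori not $\mathcal{C}^*$-mixing. The inclusion direction comes from the chain $\mathcal{C}^*\text{-mixing}\Rightarrow\text{weak mixing}\Rightarrow\text{strong scattering}$, not from the set-theoretic inclusion you wrote (note that in \eqref{mixingfamily} one has $\mathcal{D}^*\subset\mathcal{C}^*$, not the reverse, so your ``$\mathcal{C}^*\subset\mathcal{D}^*$'' is backwards).
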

\begin{proof}
 By Theorem \ref{strong-scatt}, $(X,\,T)$ is strong scattering if and only if it is $(\mathcal{D}-\mathcal{D})^*$-transitive. 
Also, there is a strong scattering TDS which is not weakly mixing and hence it is not $\mathcal{C}^*$-mixing \cite{huang4}.
 \end{proof}
 Trivially, the above theorem shows that $\mathcal{D}^*$-mixing and $(\mathcal{D-D})^*$-transitive are different families. In fact, in a TDS we have the following inclusions (see \eqref{diag}).  
 \begin{eqnarray*}
\mathcal{D}^*\mbox{-mixing}    \subsetneq \mathcal{C}^*\mbox{-mixing}  &\subsetneq&\mbox{weak mixing}\\
  &\subsetneq&  (\mathcal{D-D})^*\mbox{-transitive}
  \subseteq  (\mathcal{C-C})^*\mbox{-transitive}.
\end{eqnarray*}


\begin{thm}\label{C-C-1} 
An scattering TDS with full support measure is weak mixing.
\end{thm}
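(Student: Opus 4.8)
The plan is to show that an scattering TDS $(X,\,T)$ carrying an invariant Borel probability measure $\mu$ of full support is automatically weak mixing, by exploiting the fact that for such a system the return time sets $N(U,\,V)$ already lie in a large family — namely $(\mathcal{S}-\mathcal{S})^*$ by Lemma~\ref{scat} — while the invariant measure forces a kind of recurrence that upgrades this to thickness of $N(U,\,V) \cap N(U,\,U)$. Recall from the discussion after Theorem~\ref{mixing} that weak mixing is equivalent to $\mathcal{N}$ being thick, equivalently to $N(U,\,V) \cap N(U,\,U)$ being thick (indeed $\mathcal{T}$-transitive $=$ weak mixing) for all opene $U,\,V$. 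So the target is: for any opene $U,\,V$, the set $N(U,\,V) \cap N(U,\,U)$ contains arbitrarily long intervals.

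First I would fix opene $U,\,V$ and a length $\ell$, and aim to produce $m$ with $\{m,\,m+1,\,\ldots,\,m+\ell\} \subset N(U,\,V) \cap N(U,\,U)$. The key step is to use the full-support invariant measure: since $\mu$ has full support, $\mu(U),\,\mu(V) > 0$; by the Poincaré recurrence / Khintchine-type argument applied to the intersections $\bigcap_{i=0}^{\ell} T^{-(m+i)}W$ one does not directly get an interval, so instead I would pass through the $E$-system machinery already invoked in this paper. Concretely, one shows that a system with a full-support invariant measure is an $E$-system, hence every transitive point is essentially recurrent (cf. the proof of Theorem~\ref{strong-scatt}), and then combine essential recurrence with the scattering hypothesis. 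The cleanest route: scattering $\Leftrightarrow$ $(\mathcal{S}-\mathcal{S})^*$-transitive, so $N(U,\,V) \in (\mathcal{S}-\mathcal{S})^*$; but a syndetic set minus a syndetic set is in particular relatively dense, and intersecting with the measure-theoretic recurrence set (which is thickly syndetic, by $\mathcal{TS} = \mathcal{PS}^*$ and the fact that $N_\mu(U,\,U) := \{n : \mu(U \cap T^{-n}U) > 0\}$ is a $\Delta^*$-set, hence thickly syndetic) forces $N(U,\,V) \cap N(U,\,U)$ to meet arbitrarily long intervals.

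The main obstacle — and the step I would spend the most care on — is extracting \emph{thickness} (arbitrarily long intervals) rather than merely non-emptiness or syndeticity from the two ingredients. The scattering hypothesis only gives membership in $(\mathcal{S}-\mathcal{S})^*$, which is a priori much weaker than thick; so the invariant full-support measure must do the heavy lifting. The natural mechanism is: full support $+$ invariance gives, for every opene $U$, that $N(U,\,U)$ contains a set of the form $FS$-type returns making it a $\Delta^*$-set (so $\mathcal{IP}^*$, hence thickly syndetic), and one then argues that a $(\mathcal{S}-\mathcal{S})^*$-set intersected with a thickly syndetic set is thick — this last implication using that $(\mathcal{S}-\mathcal{S})^* \supset \mathcal{PS}^* = \mathcal{TS}$ fails in the wrong direction, so the correct argument must instead first show $N(U,\,V)$ itself is already thick for an $E$-system with the scattering property, via the complexity-function characterization: scattering means $\mathcal{C}_A(\mathcal{U}) = \infty$ for $A \in \mathcal{PS}$, and the presence of a full-support invariant measure lets one choose $A$ to be an interval-rich (thick) set along which complexity blows up, which by the standard complexity-vs-transitivity dictionary \cite{huang3} yields weak mixing. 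I would therefore structure the final write-up around that complexity argument, with the $E$-system observation supplying the thick witnessing sequence.
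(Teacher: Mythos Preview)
Your proposal does not constitute a proof: it is a sequence of attempted approaches, each of which you yourself abandon. The first route (direct intervals via Poincar\'e/Khintchine) you concede does not give intervals; the ``cleanest route'' you then correctly diagnose as failing because the inclusion $(\mathcal{S}-\mathcal{S})^* \supset \mathcal{TS}$ points the wrong way, so intersecting an $(\mathcal{S}-\mathcal{S})^*$-set with a thickly syndetic set need not be thick; and the final complexity-function suggestion is left as a vague pointer to \cite{huang3} with no statement of which complexity lemma would convert scattering plus a full-support measure into weak mixing. So no argument here actually reaches the conclusion.

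The missing idea is a detour through the \emph{measure-theoretic} category. The paper does not attempt to prove directly that $N(U,\,V)$ is thick. Instead it invokes the Kuang--Ye characterization \cite{K-Ye}: an MDS $(X,\,\mathcal{B},\,\mu,\,T)$ is (measure-theoretically) weak mixing if and only if $\{n:\mu(A\cap T^{-n}B)>0\}$ is a $(\mathcal{C}-\mathcal{C})^*$-set for all $A,\,B$ of positive measure. Since $\mu$ has full support, the topological $(\mathcal{C}-\mathcal{C})^*$-transitivity (which is scattering, by Lemma~\ref{scat}) feeds into this criterion, giving measure-theoretic weak mixing of $(X,\,\mathcal{B},\,\mu,\,T)$; and measure-theoretic weak mixing with full support implies topological weak mixing. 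Your purely combinatorial attempts cannot succeed precisely because scattering alone is strictly weaker than weak mixing; the full-support measure must be used \emph{dynamically} (via the Kuang--Ye result), not merely to certify that $N(U,\,U)$ is large.
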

\begin{proof}
In \cite{K-Ye}, it has been shown that an MDS $(X,\,\mathcal{B},\,\mu,\,T)$  is weak mixing if and only if $\{n: \mu(A\cap T^{-n}B)>0\}$ is a recurrence set (that is $(\mathcal{C-C})^*$-set)  for any $A,\,B\in \mathcal{B}$. 

Now let $(X,\,T)$  be an scattering TDS with $\mathcal{B}$  its Borel sigma algebra. 
By Lemma \ref{scat}, $(X,\,T)$ is $(\mathcal{C-C})^*$-transitive and with  its full support measure $\mu$,  $(X,\,\mathcal{B},\,\mu,\,T)$ is a weakly mixing MDS and thus a  weakly mixing TDS. 
\end{proof}
\section {Measure dynamical systems and TDS with full support}\label{mds}
{
For an MDS $(X,\,\mathcal{B},\,\mu,\,T)$, let $\mathcal{B}^+=\{B\in\mathcal{B}: \mu(B)>0\}$.  Also for $\epsilon>0$; $A,\,B\in\mathcal{B}^+$     define
 $$N_{\mu}(A,\,B)=\{n\in\mathbb{Z}: \mu(A\cap T^{-n}B)>0\}$$ 
and call
$$R^{\epsilon}_{A,\,B}=\{n\in\mathbb{Z}: \mu(A\cap T^{-n}B)>\mu(A)\mu(B)-\epsilon\}$$
 a \emph{fat intersection} \cite{ber}. 

An MDS is called \emph{$\mathcal{F}$-ergodic} if for any $A,\,B\in\mathcal{B}^+$, $N_{\mu}(A,\,B)\in \mathcal{F}$ and it is called \emph{$\mathcal{F}$-mixing}
 if and only if $R^{\epsilon}_{A,\,B}\in\mathcal{F}$.
{
 In \cite{ber}, it has been shown that 
in spite of different combinatorial families appearing in \eqref{mixingfamily}, the associated MDS families may be the same; namely ${\mathcal D}^*$-mixing  and  ${\mathcal C}^*$-mixing are equal to weak mixing and ${ \Delta}^*$ -mixing is strong mixing.
}
 Now we aim to extend this investigation for  families in \eqref{diag} for MDS.

{
In \cite{K-Ye}, Kuang and Ye show that if 
$\mathcal{F}\neq \Delta$, $(\mathcal{F-F})^*$-ergodic implies $\mathcal{F}^*$-mixing in MDS. For instance 
$(IP-IP)^*$-ergodic is measure-theoretical mild mixing or equivalently $IP^*$-mixing. Also by Theorem \ref{C-C-1},
 $(\mathcal{C-C})^*$-ergodic is $\mathcal{C}^*$-mixinig or equivalently weak mixing. 
But for $\mathcal{F}=\Delta$,  $\Delta^*$-ergodic is not anymore $\Delta^*$-mixing; an example is provided in \cite{K-Ye}. Thus $(\Delta-\Delta)^*$-ergodic is not $\Delta^*$-mixing unlike any other $(F-F)^*$-mixing.
}
Let us summarize our above discussion in Figure \ref{fig1}.  
{
These are in fact the same inclusions as in \eqref{diag}, with three boxes.
First let $\mathcal{F}$ be a family in the middle or the right box, in these situations, $\mathcal{F}$-mixing and $\mathcal{F}$-ergodic are equivalent. 
}
In fact in the middle, they are mild mixing while in the   right they are weak mixing. 
Now for $\mathcal{F}$ in the left box, $\mathcal{F}$-mixings are equivalent and all are strong mixing and whether  $({\Delta}-\Delta)^*$-ergodic is $\Delta^*$-ergodic (which is as  above said different from $\Delta^*$-mixing) is an open problem \cite{K-Ye}.
{
This problem is solved for some important classes of topological dynamics. For instance, 
 in \cite{our-coded} it has been shown that totally transitive coded systems are strong mixing and since all weak mixing are totally transitive so in coded systems all the families are the same.}

\begin{figure}[ht]
\centerline{\includegraphics[width=10cm]{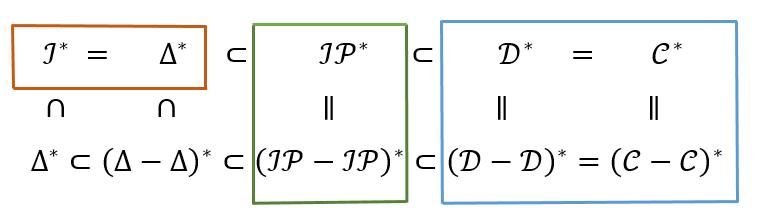}}
\caption{\label{fig1}\small Equivalence properties of Mixing and ergodic in MDS.}
\end{figure}

\subsection{TDS with full support measure and minimal systems}
Systems with full support measure share a few mixing and ergodic properties with their counterparts in MDS with this reservation that ergodic in MDS will be replaced with transitive in TDS. 

{
Thus by the above discussion, in a full support TDS, if $\mathcal{F}\neq \Delta$, then $(\mathcal{F-F})^*$-transitive and $\mathcal{F}^*$-mixing are equivalent.
}
{
Here we briefly examine our above results for systems with full support and recall that an important class of systems with full support is the class of minimal system. 
}

{
%
 
 Moreover, the equivalency of two right boxes in Figure \ref{fig1}  transfer to this case. 
 Since $\Delta^*$-ergodic is a different property from strong mixing in MDS, $\Delta^*$-transitive cannot imply strong mixing. 
 So we have Figure \ref{fig2} for TDS with full support measure from  \eqref{diag}.
 
 \begin{figure}[ht]
\centerline{\includegraphics[width=10cm]{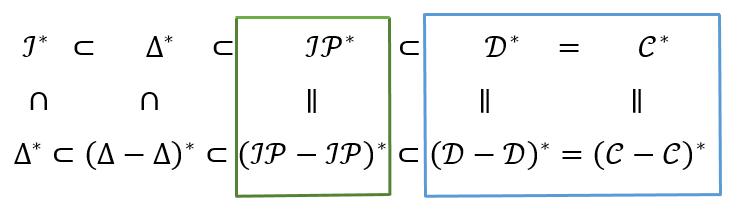}}
\caption{\label{fig2}\small Mixing and transitive properties for TDS with full support measure and minimal systems.}
\end{figure}

{
In minimal systems, we consider the following diagram. Chacon is a minimal mild mixing which is not mixing, so at least one  of the following inclusion must be proper in minimal case.
\begin{eqnarray*}
\mbox{mixing}\subseteq \Delta^*\mbox{-mixing} \subseteq \Delta^*\mbox{-ergodic}  \subseteq (\Delta-\Delta)^*\mbox{-ergodic} \subseteq \mathcal{IP}^*\mbox{-mixing} 
\end{eqnarray*}
 In fact, the equality for the first inclusion was asked in \cite{huang3}; and 
if one can give an example of a minimal $\mathcal{IP}^*$-mixing rigid (mild mixing), then by \cite[Corollary 2.13]{our2} $(\Delta-\Delta)^*\mbox{-ergodic}\subsetneq \mathcal{IP}^*$-mixing. 
}}

}

\end{document}